\begin{document}

\title[Fej\'er Polynomials and Control of Discrete Systems]{Fej\'er Polynomials and Control of Nonlinear Discrete Systems}
\author[Dmitrishin et al.]{D. Dmitrishin}
\address{Odessa National Polytechnic University, 1 Shevchenko Avenue, Odessa 65044, Ukraine}
\email{dmitrishin@opu.ua}
\author[]{P. Hagelstein}
\address{Department of Mathematics, Baylor University, Waco, Texas 76798}
\email{paul\!\hspace{.018in}\_\,hagelstein@baylor.edu}
\thanks{This work was partially supported by a grant from the Simons Foundation (\#208831 to Paul Hagelstein).}
\author[]{A. Khamitova}
\address{Georgia Southern University, Department of Mathematical Sciences,  203 Georgia Avenue,
 Statesboro, Georgia 30460-8093}
\email{anna\!\hspace{.018in}\_\,khamitova@georgiasouthern.edu}
\author[]{A. Korenovskyi}
\address{Odessa National University, Dvoryanskaya 2, Odessa 65000, Ukraine}
\email{anakor@paco.net}
\author[]{A. Stokolos}
\address{Georgia Southern University, Department of Mathematical Sciences,  203 Georgia Avenue,
 Statesboro, Georgia 30460-8093}
\email{astokolos@georgiasouthern.edu}

\date{}

\subjclass[2010]{Primary 93B52, 42A05}
\keywords{control theory, stability}

\begin{abstract}

We consider optimization problems associated to a delayed feedback control (DFC) mechanism for stabilizing cycles of one dimensional discrete time systems.   In particular, we consider a delayed feedback control for stabilizing $T$-cycles of a differentiable function $f: \mathbb{R}\rightarrow\mathbb{R}$ of the form
$$x(k+1) = f(x(k)) + u(k)$$
where
$$u(k) = (a_1 - 1)f(x(k)) + a_2 f(x(k-T)) + \cdots + a_N f(x(k-(N-1)T))\;,$$
with $a_1 + \cdots + a_N = 1$.   Following an approach of Morg\"ul, we associate to each periodic orbit of $f$, $N \in \mathbb{N}$, and $a_1$, \ldots, $a_N$ an explicit  polynomial whose Schur stability corresponds to the stability of the DFC on that orbit.   We prove that, given any 1- or 2-cycle of $f$, there exist $N$ and $a_1$, $\ldots$, $a_N$ whose associated polynomial is Schur stable, and we find the minimal $N$ that guarantees this stabilization.    The techniques of proof will take advantage of extremal properties of the Fej\'er kernels found in classical harmonic analysis.
\end{abstract}

\maketitle

\newcommand{\ind}{1\hspace{-2.3mm}{1}}

\section{Introduction}
\newtheorem{thm}{Theorem}

 Problems related to the control of chaotic systems have received considerable attention in a number of disciplines, in particular engineering, physics, and mathematics.  In the foundational paper \cite{ogy90}, Ott, Grebogi, and Yorke observed that chaotic systems frequently contain unstable periodic orbits that may be stabilized by small time-dependent perturbations.   Specific control mechanisms for stabilizing chaotic systems were explored in subsequent papers such as  \cite{chen, shinbrot}.    A method of control of particular interest is the delayed feedback control (DFC) scheme introduced by Pyragas in \cite{pyragas92}.   The control in the Pyragas scheme is essentially a multiple of the difference between the current and one period delayed states of the system.  Distinctive  advantages of this scheme include the facts that the control term vanishes if the system is already in a periodic orbit and that the control term tends to zero as trajectories approach a given periodic orbit.  This DFC control mechanism finds many applications ranging from the stabilization of the modulation index of lasers to the suppression of pathological brain rhythms  \cite{bdg94, rp04}.

 In spite of its relative simplicity and broad range of application, the stability analysis of the DFC mechanism remains a delicate issue.  A particularly motivational paper to us in this regard is one of Morg\"ul.  In \cite{Morgul}, Morg\"ul considers the one-dimensional discrete time system
$$x(k+1) = f(x(k)) + u(k)\;,$$
with $k \in \mathbb{Z}$ being the time index and $f: \mathbb{R} \rightarrow \mathbb{R}$ an appropriately differentiable function.     We suppose that $f$ has a (possibly unstable) $T$-periodic orbit $\Sigma_T = \{x_0^\ast, \ldots, x_{T-1}^\ast\}$, where $f(x_{j\!\mod T}^\ast) = x_{j + 1\!\mod T}^\ast$\;.  A DFC control one may use to stabilize the orbit $\Sigma_T$ is$$u(k) = K(x(k) - x(k - T)).$$  As shown by Morg\"ul, we may analyze the local stability of this control by considering the auxiliary function $G: \mathbb{R}^{T+1} \rightarrow \mathbb{R}^{T+1}$ defined by
$G(z_1, \ldots, z_{T+1}) = (z_2, \ldots, z_{T+1}, f(z_{T+1}) + K(z_{T+1} - z_1))$.  We define $F: \mathbb{R}^{T+1} \rightarrow \mathbb{R}^{T+1}$ by $F = G^{T}$ (the composition of $G$ with itself $T$ times.)   Observe that $\Sigma_T':=\{x_0^\ast, \ldots, x^\ast_{T-1}, x_0^\ast\}$ is a fixed point of $F$.  The stability of $\Sigma_T$ under this control mechanism is equivalent to the stability at $\Sigma_T'$ of the system $\hat{x}(k+1) = F(\hat{x}(k))$.  The latter may be analyzed by finding the Jacobian of $F$ at $\Sigma_T'$.   The Jacobian of $F$ has a characteristic polynomial $p(\lambda)$, and a $T$-cycle $\Sigma_T$ is exponentially stable under the control provided that $p(\lambda)$ is Schur stable, i.e. all of its eigenvalues lie inside the unit disc of the complex plane.
 In \cite{Morgul}    Morg\"ul was able to explicitly provide the calculation of the above characteristic polynomial.

Motivated by this previous work, we are engaged in a research program involving a control that takes into account a deeper prehistory of the output values of a function.  In particular, we are considering a control of the form

\begin{equation}\tag{$\ast$}
u(k) = (a_1 - 1)f(x(k)) + a_2 f(x(k-T)) + \cdots + a_N f(x(k-(N-1)T))\;,\end{equation}
where $a_1 + \cdots + a_N = 1$, that takes into account not only the value of $f$ at $x_{k-T}$ but also $x_{k-2T}, x_{k-3T}, \ldots, x_{k-(N-1)T}$.  The reason for considering a control of this type is that, rather than having only one parameter $K$ that may be modified in our attempt to provide stability, we have a collection of parameters $a_1, \ldots, a_N$ at our disposal that may be adjusted to provide a more robust control mechanism.  In particular, limitations of the Pyragas control exhibited by Ushio \cite{Ushio} may under many conditions be bypassed by applying the above control for $N$ suitably large.  An explicit elementary example indicating the usefulness of this control is given in  the paper \cite{dhks2015}.
\\

Proceeding along the lines of the ideas of Morg\"ul, we associate to the above control a map $G:\mathbb{R}^{T(N-1) + 1} \rightarrow \mathbb{R}^{T(N-1)+1}$ defined by \\

$G(x_1, \ldots, x_{T(N-1) + 1}) =$\\ $(x_2, x_3, \ldots, x_{T(N-1) + 1}, a_1f(x_{T(N-1) + 1}) + a_2f(x_{T(N-2) + 1}) + \cdots + a_Nf(x_1))$.
\\

We  define $F = G^T$, and the stability of the above control on the cycle $\Sigma_T$ may be ascertained by the location of the roots of the characteristic polynomial of the Jacobian of $F$ at $(x_0^\ast, x_1^\ast, \ldots, x_{T(N-1)}^\ast)$ where for convenience we set $x_j^\ast = x^\ast_{{j\mod T}}\;.$  In the paper \cite{dhks2015}, we proved that this polynomial is given by
$$p(\lambda) =\lambda^{(N-1)T + 1} - \mu (q(\lambda))^{T}  \;,$$
where $\mu = f'(x_0^\ast) \cdots f'(x_{T-1}^\ast)$ and
$$q(\lambda) = a_1 \lambda^{N-1} + \cdots + a_{N-1}\lambda + a_N\;.$$

Having found this polynomial, we may naturally ask:  given a $T$-cycle of $f$ and an  associated multiplier $\mu < 1$ (we disregard multipliers $\mu \geq 1$ because any $T$-cycle of $f$ is automatically unstable under any control of the type we are considering), does there exist an $N$ and $a_1, \ldots, a_N$ satisfying $a_1 + \cdots + a_N = 1$ such that the $T$-cycle is stable under the control, e.g. that all of the roots of the polynomial $p(\lambda)$ above lie in the unit disc of $\mathbb{C}$?   Also, given $N$, 
what choice of $a_1$, $\ldots$, $a_N$ provides the largest open interval $\mathcal{I} \subset \mathbb{R}$
 such that $p(\lambda)$ is Schur stable whenever $\mu \in \mathcal{I}$?

In this paper, we address the above problems for the cases that $T = 1$ and $T = 2$.   In the $T=1$ case we prove the following:

\begin{thm}\label{thm1}
Let $T = 1$ and $N \in \mathbb{N}$.  Suppose  $\tilde{\mu} \in (-\cot^2\frac{\pi}{2(N+1)}, 1)$.   Then there exist  $a_1$, \ldots, $a_N$ satisfying $a_1 + \cdots + a_N = 1$ such that, if $\mu \in (\tilde{\mu}, 1)$, all the roots of the polynomial $p(\lambda)$ lie in the unit disc $\{z \in \mathbb{C} : |z| < 1\}$.
\end{thm}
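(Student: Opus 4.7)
The plan is to translate Schur stability into a sign (positivity) condition on an associated Chebyshev-type polynomial, and then use Fej\'er kernel extremal theory to identify the correct threshold.

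First I would introduce the auxiliary polynomial $Q(w) := \sum_{k=1}^{N} a_k w^k$, so that $Q(0) = 0$ and $Q(1) = 1$. Then $p(\lambda) = \lambda^N - \mu q(\lambda)$ has a root on the unit circle at $\lambda = e^{i\theta}$ if and only if $\mu = e^{iN\theta}/q(e^{i\theta}) = 1/\overline{Q(e^{i\theta})}$, which, for real $\mu$, forces $Q(e^{i\theta}) \in \mathbb{R}$ with critical value $\mu = 1/Q(e^{i\theta})$. Since $p(\lambda) = \lambda^N$ at $\mu = 0$ is Schur stable, stability on $(\tilde\mu, 1)$ is equivalent to the absence of real critical $\mu$ in that interval, i.e., every real value $r$ of $Q(e^{i\theta})$ must lie in $[1/\tilde\mu,\,1]$. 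Writing
\[
\operatorname{Im} Q(e^{i\theta}) = \sum_{k=1}^{N} a_k \sin(k\theta) = \sin\theta \cdot R(\cos\theta), \qquad R(u) := \sum_{k=1}^{N} a_k U_{k-1}(u),
\]
with $U_{k-1}$ the Chebyshev polynomial of the second kind (so $\deg R = N-1$), the real crossings of the image curve occur at $\theta \in \{0, \pi\}$ (giving $Q(1) = 1$ and $Q(-1) = \sum_k (-1)^k a_k$), together with those $\theta$ with $\cos\theta \in (-1, 1)$ a zero of $R$.

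Assume $\tilde\mu < 0$ (the case $\tilde\mu \ge 0$ is strictly easier and follows automatically) and set $c := -1/\tilde\mu$, so the hypothesis $\tilde\mu > -\cot^2(\pi/(2(N+1)))$ is equivalent to $c > \tan^2(\pi/(2(N+1)))$. The goal becomes: find $a_k$ with $\sum a_k = 1$, with $Q(-1) = -c$, and with $R$ of constant sign on $(-1, 1)$ (so that $R$ has no zeros there). If these conditions can be arranged, the real values of $Q$ on the unit circle are exactly $\{1, -c\}$, the corresponding critical $\mu$'s are just $\{1, \tilde\mu\}$, and Schur stability on $(\tilde\mu, 1)$ follows by the continuity argument of the previous paragraph.

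The crucial and delicate assertion is that the threshold for solvability of this constrained sign problem is \emph{exactly} $c^* = \tan^2(\pi/(2(N+1)))$, and this is where the Fej\'er kernel enters. At the boundary $c = c^*$, the extremal $R$ should acquire a double zero at $u^* = -\cos(\pi/(N+1))$, mirroring the vanishing of the Fej\'er kernel $F_N(\theta) = (N+1)^{-1}\bigl(\sin((N+1)\theta/2)/\sin(\theta/2)\bigr)^2$ at the first Fej\'er angle $\theta = 2\pi/(N+1)$. For $c$ strictly above $c^*$, a one-parameter family of $R$ with no zero in $(-1, 1)$ should be produced by perturbing the extremal Fej\'er polynomial, the strict sign condition being open. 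I expect the main obstacle to be the rigorous identification of the threshold $c^*$ and the explicit perturbation: one must expand $R$ in the basis $\{U_{k-1}\}$, use the two linear constraints $\sum a_k = 1$ and $Q(-1) = -c$ to reduce the degrees of freedom, and then invoke Fej\'er's classical extremal bounds on non-negative trigonometric polynomials (together with Fej\'er--Riesz factorization) to verify that strict positivity of $R$ on $(-1, 1)$ can be maintained precisely when $c > c^*$.
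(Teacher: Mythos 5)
Your reduction is the same as the paper's: a root of $p$ crosses $\partial\mathbb{D}$ only at the critical values $\mu = 1/Q(e^{i\theta})$ with $Q(e^{i\theta})$ real, so, starting from the Schur--stable case $\mu=0$ and moving $\mu$ continuously, it suffices to force every real value of $Q$ on the circle into $[1/\tilde\mu,\,1]$; factoring $\operatorname{Im} Q(e^{i\theta})=\sin\theta\cdot R(\cos\theta)$ and demanding that $R$ keep a fixed sign on $(-1,1)$ reduces the problem to Fej\'er's coefficient inequality for nonnegative cosine polynomials. Up to this point the argument is sound and coincides with Sections 2 and 5 of the paper.

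The gap is in the construction. First, pinning $Q(-1)=-c$ exactly is both unnecessary and, on part of the stated range, impossible. Writing $R(\cos\theta)=\gamma_1+2\gamma_2\cos\theta+\cdots$, the two constraints $\sum_k a_k=1$ and $Q(-1)=-c$ force $\gamma_1=\tfrac{1+c}{2}$ and $\gamma_2=\tfrac{1-c}{2}$, and Fej\'er's inequality $|\gamma_2|\le\gamma_1\cos\tfrac{\pi}{N+1}$ then requires not only $c\ge\tan^2\tfrac{\pi}{2(N+1)}$ but also $c\le\cot^2\tfrac{\pi}{2(N+1)}$. For $\tilde\mu\in\bigl(-\tan^2\tfrac{\pi}{2(N+1)},0\bigr)$ (a nonempty part of the hypothesis when $N\ge 2$) one has $c=-1/\tilde\mu>\cot^2\tfrac{\pi}{2(N+1)}$, so no admissible nonnegative $R$ exists and your claimed solvability ``precisely when $c>c^*$'' is false. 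The fix is to aim only for $Q(-1)\ge -c$: a choice with $Q(-1)$ near $-\tan^2\tfrac{\pi}{2(N+1)}$ works uniformly for every $\tilde\mu$ in the hypothesis. Second, the heart of the matter --- exhibiting a strictly positive $R$ on $(-1,1)$ whose ratio $\gamma_2/\gamma_1$ is (arbitrarily close to) the extremal $\cos\tfrac{\pi}{N+1}$ --- is deferred to ``Fej\'er--Riesz,'' but Fej\'er's coefficient bound is only necessary, not sufficient, for nonnegativity, so an explicit polynomial must be produced. The paper uses the Egerv\'ary--Sz\'asz polynomial $\tfrac{2}{N+1}\bigl|\sum_{k=0}^{N-1}\sin\tfrac{(k+1)\pi}{N+1}e^{ikt}\bigr|^2$, which is nonnegative but has zeros, and then adds $\tfrac{\epsilon}{1+\epsilon}\sin t$ to $S$ to make the sign strict, obtaining $C^\epsilon(\pi)\to-\tan^2\tfrac{\pi}{2(N+1)}$; some such explicit step is needed to close your argument.
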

 The optimality of the range of $\mu$ provided above is demonstrated by the following.

 \begin{thm}\label{thm2}
 Let $T=1$ and $N \in \mathbb{N}$.  If $\tilde{\mu} < - \cot^2\frac{\pi}{2(N+1)}$, then there is no choice of $a_1$, $\ldots$, $a_N$ satisfying $a_1 + \cdots + a_N = 1$ such that, for every $\mu \in (\tilde{\mu}, 1)$, all of the roots of the polynomial $p(\lambda)$ lie in the unit disc of $\mathbb{C}$.
 \end{thm}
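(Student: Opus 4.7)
The plan has three movements: translate, reduce, and apply an extremal property.

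\emph{Translation.} Since $p(\lambda)=\lambda^N-\mu q(\lambda) = \lambda^N\bigl(1-\mu R(1/\lambda)\bigr)$, where $R(z):=\sum_{k=1}^N a_k z^k$ has real coefficients with $R(0)=0$ and $R(1)=\sum a_k=1$, the substitution $z=1/\lambda$ puts the nonzero roots of $p$ in bijection with solutions of $R(z)=1/\mu$, carrying $|\lambda|<1$ to $|z|>1$. Hence $p$ is Schur stable if and only if $1/\mu \notin R(\overline{\mathbb{D}})$, and Schur stability for every $\mu \in (\tilde\mu,1)$ is equivalent to $R(\overline{\mathbb{D}})\cap\mathbb{R}\subseteq[1/\tilde\mu,1]$.

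\emph{Reduction.} For $\tilde\mu<-\cot^2\frac{\pi}{2(N+1)}$ one has $1/\tilde\mu>-\tan^2\frac{\pi}{2(N+1)}$, so it suffices to establish the extremal statement
$$R(\overline{\mathbb{D}})\cap\bigl(-\infty,-\tan^2\tfrac{\pi}{2(N+1)}\bigr]\neq\emptyset$$
for every admissible $R$. The first easy observation is that $\lambda=-1$ produces a root of $p$ precisely for $\mu=1/R(-1)$, so if $R(-1)\le -\tan^2\frac{\pi}{2(N+1)}$ or $R(-1)>1$ the required value in $(-\cot^2\frac{\pi}{2(N+1)},1)$ is already produced. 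The only case remaining is $R(-1)\in (-\tan^2\frac{\pi}{2(N+1)},1]$, in which we must locate a real value of $R$ on $\overline{\mathbb{D}}$ attained at some $z\neq -1$.

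\emph{Applying the extremal property.} The plan in this last case is to invoke the dual of Theorem~1: among real-coefficient polynomials $R(z)=\sum_{k=1}^N a_kz^k$ with $R(0)=0$, $R(1)=1$, the Fej\'er-type extremal polynomial produced in Theorem~1 uniquely maximizes the quantity $\inf\bigl(R(\overline{\mathbb{D}})\cap\mathbb{R}\bigr)$, with maximum value $-\tan^2\frac{\pi}{2(N+1)}$ attained precisely at $z=-1$. Hence any other admissible $R$, and in particular any $R$ with $R(-1)>-\tan^2\frac{\pi}{2(N+1)}$, must have $R(\overline{\mathbb{D}})\cap\mathbb{R}$ extending strictly below $-\tan^2\frac{\pi}{2(N+1)}$, producing the desired crossing of the forbidden ray and yielding the contradiction.

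\emph{Main obstacle.} The crux is the sharp extremal characterization used in the last step. A clean route is via Fej\'er--Riesz factorization: if $R(\overline{\mathbb{D}})$ avoided $\bigl(-\infty,-\tan^2\frac{\pi}{2(N+1)}\bigr]$, the trigonometric polynomial $\mathrm{Re}\,R(e^{i\theta})+\tan^2\frac{\pi}{2(N+1)}$ together with $\mathrm{Im}\,R(e^{i\theta})$ could be combined into a non-negative trigonometric polynomial of degree $N$ having a forced multiple zero at $\theta=\pi$, and a dimension count (using $\sum a_k=1$) would show this is incompatible with a non-extremal choice of coefficients. Geometrically, the extremal image curve $R(\partial\mathbb{D})$ is tangent to the real axis at $-\tan^2\frac{\pi}{2(N+1)}$, and any perturbation preserving $\sum a_k=1$ must break the tangency in the direction pushing a real crossing of the curve strictly to the left; turning this geometric picture into the required inequality is the delicate technical step.
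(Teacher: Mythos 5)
Your translation and reduction are sound and essentially match the paper's preliminaries: for $T=1$ one has $p(\lambda)=\lambda^{N}-\mu q(\lambda)=\lambda^{N}\bigl(1-\mu R(1/\lambda)\bigr)$ with $R(z)=\sum_k a_k z^k$, so Schur stability for every $\mu\in(\tilde\mu,1)$ is equivalent to $R(\overline{\mathbb{D}})\cap\mathbb{R}\subseteq[1/\tilde\mu,1]$, and since $\tilde\mu<-\cot^2\frac{\pi}{2(N+1)}$ forces $1/\tilde\mu>-\tan^2\frac{\pi}{2(N+1)}$, it suffices to show that every admissible $R$ takes a real value $\le-\tan^2\frac{\pi}{2(N+1)}$ on $\overline{\mathbb{D}}$. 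This is precisely the upper-bound half of the paper's Theorem 5.

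The problem is that this last statement is where the entire difficulty of Theorem 2 lives, and you do not prove it --- you assert it as "the dual of Theorem 1." Theorem 1 only supplies the achievability direction (coefficients whose infimum approaches $-\tan^2\frac{\pi}{2(N+1)}$); it gives no information about whether some other choice of coefficients could do better, which is exactly what must be ruled out here. Your fallback sketch does not close the gap: if $R(\overline{\mathbb{D}})$ avoids $(-\infty,-\tan^2\frac{\pi}{2(N+1)}]$, this only constrains $\mathrm{Re}\,R(e^{i\theta})$ at the finitely many $\theta$ where $\mathrm{Im}\,R(e^{i\theta})=0$, so $\mathrm{Re}\,R(e^{i\theta})+\tan^2\frac{\pi}{2(N+1)}$ need not be nonnegative anywhere else and there is no everywhere-nonnegative trigonometric polynomial to which a Fej\'er--Riesz argument applies; the "forced multiple zero at $\pi$" and the "dimension count" are never specified. (Also, the unperturbed Fej\'er polynomial need not itself attain the supremum of $\inf\{C(t):S(t)=0\}$, since its sine part can vanish at interior points without changing sign, where $C$ may dip below $-\tan^2\frac{\pi}{2(N+1)}$; this is why the paper only realizes the supremum as a limit over an $\epsilon$-family, and why your claim of a unique maximizer attained "precisely at $z=-1$" is not quite right as stated.) The paper's proof of the missing step occupies Sections 3--5: a Bezout-type factorization lemma for conjugate trigonometric polynomials, a compactness argument producing an optimal pair, a delicate three-case perturbation argument showing the optimal sine polynomial cannot change sign on $(0,\pi)$, and only then Fej\'er's coefficient inequality $|\lambda_1|\le 2\cos\frac{\pi}{n+2}$ applied to $S(t)/\sin t$. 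None of that machinery appears in your proposal, so the argument is incomplete at its crux.
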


  For the case that $T = 2$ we prove the following.

 \begin{thm}\label{thm3}
Let $T = 2$ and $N \in \mathbb{N}$.  Suppose $\tilde{\mu} \in (- N^2, 1)$. Then there exist $a_1$, \ldots, $a_N$ satisfying $a_1 + \cdots + a_N = 1$ such that, if $\mu \in (\tilde{\mu}, 1)$, all the roots of the polynomial $p(\lambda)$ lie in the unit disc $\{z \in \mathbb{C} : |z| < 1\}$.
\end{thm}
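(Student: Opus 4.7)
The plan follows the same framework as Theorem~\ref{thm1}: use continuity of roots in $\mu$ to reduce Schur stability on $(\tilde\mu,1)$ to the absence of roots of $p(\lambda)$ on $|\lambda|=1$, then construct the coefficients $a_1,\ldots,a_N$ from an extremal polynomial tied to the Fej\'er kernel.

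At $\mu=0$, $p(\lambda) = \lambda^{2N-1}$ is Schur stable, and the roots of $p(\lambda) = \lambda^{2N-1} - \mu q(\lambda)^2$ depend continuously on $\mu$; a root can leave the open unit disc only by first crossing $|\lambda|=1$. It thus suffices to choose $q$ so that $p(e^{i\theta}) \neq 0$ for every $\theta$ and every $\mu \in (\tilde\mu,1)$. Solving $p(e^{i\theta})=0$ for $\mu$ gives $\mu = 1/h(\theta)$ with $h(\theta) := q(e^{i\theta})^2 e^{-i(2N-1)\theta}$; since $\mu$ is real, such a bad $\mu$ can occur only at those ``critical'' $\theta$'s where $h(\theta)\in\mathbb{R}$, and in that case $\mu = \pm 1/|q(e^{i\theta})|^2$, with sign matching that of $h(\theta)$. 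The normalization $q(1)=a_1+\cdots+a_N=1$ makes $h(0)=1$, so $\mu=1$ is always critical (the right endpoint of the interval); at $\theta=\pi$ one has $h(\pi) = -q(-1)^2$, giving the critical value $\mu = -1/q(-1)^2$, which equals $-N^2$ precisely when $|q(-1)|=1/N$.

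The core task is to exhibit a polynomial $q$ of degree $\leq N-1$ with $q(1)=1$ such that $|q(e^{i\theta})|^2 \leq 1/N^2$ at every $\theta$ with $h(\theta)<0$ and $|q(e^{i\theta})|^2 \leq 1$ at every $\theta$ with $h(\theta)>0$. For $N=2$, this is verified directly by the explicit choice $q(\lambda) = (3\lambda+1)/4$, whose only critical $\mu$-values are $1$ and $-4 = -N^2$. For general $N$, the extremal polynomial is built from the non-negative Fej\'er kernel $K_N(\theta) = \tfrac{1}{N}\bigl(\sin(N\theta/2)/\sin(\theta/2)\bigr)^2$ and its Fej\'er--Riesz-type factorizations; the resulting $q$ is characterized by an equi-oscillation principle in which several negative critical $\mu$-values simultaneously reach $-N^2$, so that no perturbation can pull them all strictly below. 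For a prescribed $\tilde\mu \in (-N^2,1)$ one then slightly perturbs this extremal $q$ to push every critical value strictly outside $(\tilde\mu,1)$. The principal obstacle is precisely this construction: for $N \geq 3$, the winding number of $q(e^{i\theta})$ produces additional critical $\theta$'s beyond $\{0,\pi\}$, and controlling the corresponding critical $\mu$-values uniformly requires the full strength of the Fej\'er kernel's extremal properties together with a careful bookkeeping of the $\theta$'s at which $h(\theta)$ is real.
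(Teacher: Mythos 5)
Your reduction is the right one and matches the paper's: at $\mu=0$ the polynomial $p(\lambda)=\lambda^{2N-1}-\mu q(\lambda)^2$ is Schur stable, roots move continuously in $\mu$, and instability can only set in at a ``critical'' value $\mu = 1/h(\theta)$ with $h(\theta)=q(e^{i\theta})^2e^{-i(2N-1)\theta}$ real. Your $N=2$ check with $q(\lambda)=(3\lambda+1)/4$ is correct and agrees with the paper's formula $a_j^0=(2(N-j)+1)/N^2$. But for general $N$ you stop exactly where the proof has to begin: you name the construction of the extremal $q$ and the control of all critical $\theta$'s as ``the principal obstacle'' without resolving it, so the argument is an outline rather than a proof.

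The two missing ingredients, both in the paper, are as follows. First, the bookkeeping of critical $\theta$'s you worry about is handled by an elementary complex-analysis observation rather than by winding numbers: writing $z=e^{-it}q(e^{2it})=\sum_j a_j e^{-i(2j-1)t}$, one has $h= z^2$ (up to the substitution $\theta=2t$), and $\mathfrak{I}(z^2)=0$ forces either $z\in\mathbb{R}$ (whence $z^2\ge 0$, so the critical value is positive and, since the $a_j$ will be chosen positive with sum $1$, is automatically $\ge 1$) or $\mathfrak{R}(z)=0$ (whence $z^2=-(\mathfrak{I}z)^2$ and the critical value is $-1/(\mathfrak{I}z)^2$). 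Thus the only thing to control is $|S(t)|=\left|\sum_j a_j\sin(2j-1)t\right|$ at the zeros of $C(t)=\sum_j a_j\cos(2j-1)t$. Second, the explicit choice: with $a_j^0=(2(N-j)+1)/N^2$ one gets $C^0(t)=\left(\frac{\sin Nt}{N\sin t}\right)^2\cos t$, the classical Fej\'er kernel times $\cos t$, which is nonnegative on $(0,\pi/2)$ and has $S^0(\pi/2)=1/N$; since $C^0$ may still vanish (without sign change) at interior points, one perturbs to $a_1^\epsilon=(a_1^0+\epsilon)/(1+\epsilon)$, $a_j^\epsilon=a_j^0/(1+\epsilon)$, so that $C^\epsilon(t)=C^0(t)/(1+\epsilon)+\epsilon\cos t/(1+\epsilon)>0$ on $(0,\pi/2)$, the only zero of $C^\epsilon$ in $[0,\pi/2]$ is $t=\pi/2$, and the unique negative critical value $-\bigl(S^\epsilon(\pi/2)\bigr)^{-2}$ tends to $-N^2$ as $\epsilon\to 0^+$; choosing $\epsilon$ small enough that this value is below the prescribed $\tilde\mu$ finishes the proof. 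Note also that no equi-oscillation characterization is needed for Theorem~\ref{thm3} (that is only relevant to the optimality statement of Theorem~\ref{thm4}); what is needed, and what your proposal omits, is the concrete polynomial and the verification above.
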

 The optimality of the range of $\mu$ provided above is demonstrated by the following.

 \begin{thm}\label{thm4}
 Let $T=2$ and $N \in \mathbb{N}$.  If $\tilde{\mu} < - N^2$ then there is no choice of $a_1$, $\ldots$, $a_N$ satisfying $a_1 + \cdots + a_N = 1$ such that, for every $\mu \in (\tilde{\mu}, 1)$, all of the roots of the polynomial $p(\lambda)$ lie in the unit disc of $\mathbb{C}$.
 \end{thm}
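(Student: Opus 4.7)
The plan is to argue by contradiction. Suppose $\tilde{\mu} < -N^2$ and there exist $a_1, \ldots, a_N$ with $a_1 + \cdots + a_N = 1$ such that $p_\mu(\lambda) = \lambda^{2N-1} - \mu q(\lambda)^2$ is Schur stable for every $\mu \in (\tilde{\mu}, 1)$. I will derive a contradiction by exhibiting a value $\mu_0 \in [-N^2, 0) \subset (\tilde{\mu}, 1)$ and a point $\lambda_0$ on the unit circle with $p_{\mu_0}(\lambda_0) = 0$.

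First I would reformulate. A point $\lambda = e^{i\theta}$ on the unit circle is a root of $p_\mu$ precisely when $\mu = \lambda^{2N-1}/q(\lambda)^2$. Introduce the auxiliary polynomial $P(z) := \sum_{k=0}^{N-1} a_{k+1} z^k$; a direct computation shows $|q(e^{i\theta})| = |P(e^{i\theta})|$, and setting $F(\theta) := e^{i\theta/2} P(e^{i\theta}) = A(\theta) + i B(\theta)$ one finds that $\mu(\theta) = F(\theta)^2/|P(e^{i\theta})|^4$. Hence $\mu$ is real and negative exactly when $A(\theta) := \sum_{k=0}^{N-1} a_{k+1}\cos((k+\tfrac{1}{2})\theta) = 0$ (with $B(\theta) \neq 0$), in which case $\mu(\theta) = -1/|P(e^{i\theta})|^2$. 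Thus the theorem reduces to the extremal claim: for every polynomial $P$ of degree at most $N-1$ with $P(1) = 1$, some zero $\theta^*$ of $A$ satisfies $|P(e^{i\theta^*})| \geq 1/N$, for then $\mu(\theta^*) \in [-N^2, 0)$ is a bifurcation value inside $(\tilde{\mu}, 1)$ at which $p_\mu$ loses Schur stability.

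Next I would exploit two structural features. First, $A(\pi) = 0$ identically since $\cos((k+\tfrac{1}{2})\pi) = 0$, and $|P(-1)|^2 = (\sum_{k=0}^{N-1}(-1)^k a_{k+1})^2$; if $|P(-1)| \geq 1/N$ the desired $\theta^* = \pi$ works, so one may assume $|P(-1)| < 1/N$. Second, with $x = \cos(\theta/2)$ and $T_m$ the Chebyshev polynomial of the first kind, $A(\theta)$ becomes the odd polynomial $\tilde{A}(x) = \sum_{k=0}^{N-1} a_{k+1} T_{2k+1}(x) = x R(x^2)$ of degree $2N-1$, with $R$ a polynomial of degree $N-1$, while $|P(e^{i\theta})|^2$ is expressed as an even polynomial $\psi(x^2)$ of degree $2(N-1)$ satisfying $\psi(0) = |P(-1)|^2$ and $\psi(1) = 1$. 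The nontrivial zeros of $A$ correspond exactly to roots $r \in (0,1)$ of $R$, so the task becomes: find a root $r^* \in (0,1)$ of $R$ with $\psi(r^*) \geq 1/N^2$. A guiding extremal is $P^*(z) = N^{-2}\sum_{j=0}^{N-1}(2j+1)z^j$; the identity $\sum_{j=0}^{N-1}(2j+1)(-1)^j = (-1)^{N-1}N$ yields $|P^*(-1)| = 1/N$ exactly, which shows both that $-N^2$ is attained and that $P^*$ is the natural extremizer.

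The main obstacle is completing the argument in the regime $|P(-1)| < 1/N$, where one must show that $R$ necessarily has a root $r^* \in (0,1)$ at which $\psi(r^*) \geq 1/N^2$. My plan is to proceed via an extremal argument in the spirit of Fej\'er: combine the Fej\'er--Riesz factorization of the nonnegative trigonometric polynomial $|P(e^{i\theta})|^2$ with the interlacing of the zeros of $R$ against the nodes of the Chebyshev family $\{T_{2k+1}\}$, and then read off the required lower bound by direct comparison with the extremizer $P^*$. Establishing this extremal inequality cleanly is the hardest step; the sharpness of $P^*$ and the very appearance of $-N^2$ parallel precisely the Fej\'er-kernel extremal phenomena that the abstract of the paper highlights as underpinning the entire analysis.
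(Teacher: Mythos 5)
Your reduction is sound and in fact coincides with the paper's: under $t=\theta/2$ your $A(\theta)$ and $B(\theta)$ are exactly the conjugate polynomials $C(t)=\sum_j a_j\cos((2j-1)t)$ and $S(t)=\sum_j a_j\sin((2j-1)t)$ of Section 2, your identity $\mu(\theta)=-1/|P(e^{i\theta})|^2$ at zeros of $A$ is the paper's computation of $1/\tilde{\mu}_2$, and the extremal claim you isolate --- every $P$ of degree $\le N-1$ with $P(1)=1$ admits a zero $\theta^*$ of $A$ with $|P(e^{i\theta^*})|\ge 1/N$ --- is precisely the upper-bound half of Theorem 6. The observation that $\theta^*=\pi$ disposes of the case $|P(-1)|\ge 1/N$ is also correct, and finding a boundary root at some $\mu_0\in[-N^2,0)\subset(\tilde{\mu},1)$ does contradict Schur stability on $(\tilde{\mu},1)$.

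The difficulty of the theorem, however, lives entirely in the remaining case, and there you offer a plan rather than a proof. ``Fej\'er--Riesz factorization plus interlacing against Chebyshev nodes plus comparison with $P^*$'' leaves the two essential points unaddressed: why, when $|P(-1)|<1/N$, must $R$ have a root in $(0,1)$ at all (already this requires Fej\'er's bound $0\le g\le n+1$ in contrapositive form), and why must some such root satisfy $\psi(r^*)\ge 1/N^2$. The paper's proof of exactly this step is the bulk of Section 6: it produces an extremal pair by a compactness argument (Lemma \ref{l6}, which needs the a priori separation bound of Lemma 2 to confine the coefficients to a compact set), rules out any sign change of the extremal $C^0$ on $(0,\pi/2)$ by a three-case perturbation of the factorization supplied by Lemma 1 (Lemma \ref{l7}), and only then applies Fej\'er's inequality to the resulting nonnegative polynomial to get $|S^0(\pi/2)|\ge 1/N$. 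None of that machinery is present or replaced in your outline. A further warning sign: your comparison object $P^*(z)=N^{-2}\sum_j(2j+1)z^j$ is the coefficient-reversal of the actual extremizer $a_j^0=(2(N-j)+1)/N^2$ coming from the Fej\'er kernel; although $|P^*(-1)|=1/N$, the $A$ associated to your $P^*$ is not nonnegative (for $N=2$ it is $\tfrac14\cos t+\tfrac34\cos 3t$, which is negative at $t=\pi/3$, whereas the paper's ordering gives $\cos^3 t$), so a ``direct comparison with the extremizer'' as stated would compare against the wrong polynomial. As it stands the argument has a genuine gap at its central step.
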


    These theorems indicate a very useful aspect of the type of control under consideration.  In particular, when $T=1$, if  $\tilde{\mu} \in (-\cot^2\frac{\pi}{2(N+1)}, 1)$, we may find $a_1, \ldots, a_N$ such that the control ($\ast$) stabilizes \emph{all} 1-cycles of a function $f$ so long as the associated multiplier $\mu$ lies \emph{anywhere} in the interval $(\tilde{\mu}, 1)$. A similar advantage holds for this type of control when 2-cycles are considered.  In this regard, our control avoids the deficiency of only being able to stabilize 1- and 2-cycles of orbits of $f$ associated to very particular multipliers.  We wish to thank J. P. \mbox{Kahane} for pointing out to us the desirability of constructing a control of this type.
    \\

We will see that these results are a consequence of the pioneering work \cite{fejer1915} of Fej\'er on nonnegative trigonometric polynomials.   The reader is quite likely familiar with the fact that the classical Fej\'er kernels in the theory of Fourier series are nonnegative.   Less well-known is the following: if
$$g (\theta) = 1 + \lambda_1 \cos \theta + \cdots + \lambda_n \cos n\theta$$  is nonnegative,
then
$$\left|\lambda_1\right| \leq 2 \cos\frac{\pi}{n+2}\;.$$   Moreover,
$$g(\theta) \leq n + 1\;.$$   Both of these inequalities are sharp.
 It will be shown that the bound on $\tilde{\mu}$ in Theorem 2 is equivalent to the first inequality above; the bound on $\tilde{\mu}$ for Theorem 4 is equivalent to the latter.  In that regard we see that classical inequalites associated to nonnegative trigonometric polynomials are closely related to problems of optimization in control theory.

\section{Preliminaries}

Fix $N \in \mathbb{N}$, $T\in \mathbb{N}$, and $a_1, \ldots, a_N$ such that $a_1 + \cdots + a_N = 1$, and let $p(\lambda)$ and $q(\lambda)$ be as in the previous section.   Observe that if $\mu = 0$, all the roots of $p(\lambda)$ are $0$ and accordingly lie in the unit disc $\mathbb{D}$ of the complex plane.   Now, as the roots of a polynomial vary continuously as a function of the coefficients of the polynomial \cite{harrismartin}, we see that all the roots of $p(\lambda)$ will lie in $\mathbb{D}$ provided that $\tilde{\mu} < \mu \leq 0$, where $\tilde{\mu}_T= \tilde{\mu}_T(T, N, a_1, \ldots, a_N)$ is defined by
$$\frac{1}{\tilde{\mu}_T} = \inf\{ z  < 0 : z = \frac{(q(\lambda))^T}{\lambda^{(N-1)T + 1}} \; \textup{for some} \; \lambda\; \textup{ in }\;\partial \mathbb{D}\}.  $$
Although elementary, these observations clarify considerably the desired coefficients $a_1, \ldots, a_N$ in the control ($\ast$): we seek $a_1, \ldots, a_N$ that will yield a polynomial $q(\lambda)$ such that the above infimum is as close to 0 as possible.

Noting that $\partial{\mathbb{D}} = \{e^{i \omega} : \omega \in [0,2\pi]\}$, we have that
\begin{align}\frac{1}{\tilde{\mu}_T} &= \inf_{\lambda \in \partial \mathbb{D}}\left\{\mathfrak{R}\left(\frac{(q(\lambda))^T}{\lambda^{(N-1)T + 1}}\right) : \mathfrak{I}\left(\frac{(q(\lambda))^T}{\lambda^{(N-1)T + 1}}\right) = 0\right\}\notag
\\&=\inf_{\lambda \in \partial \mathbb{D}}\left\{\mathfrak{R}\left(\frac{1}{\lambda}\left(a_1 + \cdots + a_N \lambda^{1 - N}\right)^{T}\right) : \mathfrak{I}\left(\frac{1}{\lambda}\left(a_1 + \cdots + a_N \lambda^{1 - N}\right)^{T}\right)= 0\right\}\notag
\end{align}


Recognizing that $\partial \mathbb{D} = \{e^{it} : t \in [0,2\pi)\}$, when $T=1$ we
have that

\begin{align}
\frac{1}{\tilde{\mu}_1} &= \inf_{t \in [0,2\pi)}\left\{\mathfrak{R}\left(\frac{1}{e^{it}}\left(a_1 + \cdots + a_N e^{i(1-N)t}\right)\right): \mathfrak{I}\left(\frac{1}{e^{it}}\left(a_1 + \cdots + a_N e^{i(1-N)t}\right)\right) = 0\right\}\notag
\\&= \inf_{t \in [0, \pi)} \left\{\sum_{j=1}^{N}a_j \cos jt : \sum_{j=1}^{N}a_j \sin jt = 0\right\}\;.\notag
\end{align}

It will later be important for us to know that $\tilde{\mu}_1$ is negative.   This is seen as follows.   Let $F(z) = \sum_{j=1}^N a_j z^j$.   Since $F(z)$ is nonzero and $F(0) = 0$, by the open mapping theorem we have $F(\partial \mathbb{D})$ intersects the negative $x$-axis.   Hence $\frac{1}{\tilde{\mu}_1} < 0$, and accordingly we have $\tilde{\mu}_1 < 0$.

When $T=2$, by recognizing that \mbox{$\{\lambda^2 : \lambda \in \partial \mathbb{D}\} = \partial \mathbb{D}$,} we have

\begin{align}
\frac{1}{\tilde{\mu}_2}&= \inf_{\lambda \in \partial\mathbb{D}}\left\{\mathfrak{R}\left(\frac{(q(\lambda^2))^2}{\lambda^{(2N-1)2 + 2}}\right) : \mathfrak{I}\left(\frac{(q(\lambda^2))^2}{\lambda^{2(N-1)2 + 2}}\right) = 0\right\}\notag
\\&=\inf_{t \in [0,2\pi)}\left\{\mathfrak{R}\left(\frac{1}{e^{i2t}}\left(a_1 + \cdots + a_N e^{i(1-N)2t}\right)^2\right): \mathfrak{I}\left(\frac{1}{e^{i2t}}\left(a_1 + \cdots + a_N e^{i(1-N)2t}\right)^2\right) = 0\right\}\;.\notag
\end{align}

Now, if $z \in \mathbb{C}$, we have that $\mathfrak{I}(z^2) = 0$ if and only if $\mathfrak{R}(z) = 0$ or that $z \in \mathbb{R}$.  Of course, if $z \in \mathbb{R}$ we have that $z^2 \geq 0$.  If we knew that $\tilde{\mu}_2$ were negative, we would have that the infimum would be associated only to values of $t \in [0, \pi)$ such that
$$\arg\left(\frac{1}{e^{it}}\left(a_1 + \cdots + a_N e^{i(1-N)2t}\right)\right) = \pm \frac{\pi}{2}\;.$$
For those values of $t$ we would have
$$ \mathfrak{I}\left(\frac{1}{e^{i2t}}\left(a_1 + \cdots + a_N e^{i(1-N)2t}\right)^2\right) = 0$$ if and only if $$\mathfrak{R}\left(\frac{1}{e^{it}}\left(a_1 + \cdots + a_N e^{i(1-N)2t}\right)\right) = 0.$$
If $\mathfrak{R}(z) = 0$, then $\mathfrak{R}(z^2) = - (\mathfrak{I}(z))^2$.    Accordingly, if $\tilde{\mu}_2$ were negative we would have

\begin{align}
\frac{1}{\tilde{\mu}_2}&= -\left(\inf_{t \in [0,2\pi)}\left\{  \mathfrak{I}\left( \frac{1}{e^{it}}\left(a_1 + \cdots + a_N e^{i(1-N)2t}\right)\right) : \mathfrak{R}\left(\frac{1}{e^{it}}\left(a_1 + \cdots + a_N e^{i(1-N)2t}\right)\right) = 0\right\} \right)^2 \notag
\\&= -\left(\inf_{t\in[0,2\pi)}\left\{\sum_{j=1}^N a_j \sin(2j-1)t : \sum_{j=1}^N a_j\cos(2j-1)t = 0\right\}\right)^2\;.\notag
\end{align}

      We now show that $\tilde{\mu}_2$ is indeed negative.   Define the function $F(z)$ by $F(z) = \sum_{j=1}^N a_j z^{2j-1}$.  $F$ is of course holomorphic and $F(0) = 0$.   By the open mapping theorem, since $F$ is nonzero we must have that $F(\partial \mathbb{D})$ intersects the imaginary axis away from the origin.   Let $t \in [0,2\pi)$ be such that $\mathfrak{I}(F(e^{it}) \neq 0$ and $\mathfrak{R}(F(e^{it}) = 0$.   As $\mathfrak{I}(F(e^{it})) = - \mathfrak{I}(F(e^{-it}))$ and $\mathfrak{R}(F(e^{it})) =  \mathfrak{R}(F(e^{-it}))$, we have that there exists $t \in [0,\pi)$ such that $\mathfrak{I}(F(e^{it}) \neq 0$ and $\mathfrak{R}(F(e^{it})) = 0$.   For that value of $t$, notice that $\sum_{j=1}^N a_j \sin(2j-1)t \neq 0$ and $\sum_{j=1}^{N} \cos(2j-1)t = 0$, and accordingly
      $$\mathfrak{I}\left( \frac{1}{e^{it}}\left(a_1 + \cdots + a_N e^{i(1-N)2t}\right)\right) \neq 0$$
      and
      $$\mathfrak{R}\left(\frac{1}{e^{it}}\left(a_1 + \cdots + a_N e^{i(1-N)2t}\right)\right) = 0\;.$$

      Now, if $z \in \mathbb{C}$ lies on the imaginary axes and away from the origin, we necessarily have that $z^2$ lies on the negative real axis.   Hence there exists $t \in [0,\pi)$ such that
      $$\mathfrak{R}\left(\frac{1}{e^{i2t}}\left(a_1 + \cdots + a_N e^{i(1-N)2t}\right)^2\right) < 0$$ and

      $$\mathfrak{I}\left(\frac{1}{\lambda}\left(a_1 + \cdots + a_N \lambda^{1 - N}\right)^{T}\right) = 0\;.$$  So $\tilde{\mu}_2$ is negative, as desired.

      Having computed $\tilde{\mu}_1$, we recognize that Theorems 1 and 2 are immediate consequences of the following:

      \begin{thm}\label{thm5}
   $$ \frac{-1}{\cot^2\frac{\pi}{2(N+1)}} = \sup_{a_1 + \cdots + a_N = 1} \inf_{t \in [0, \pi)} \left\{\sum_{j=1}^{N}a_j \cos jt : \sum_{j=1}^{N}a_j \sin jt = 0\right\}\;.
   $$
    Moreover, defining $a_1^0, \ldots, a_N^0$ by
    \begin{equation}
a_{j}^0 = 2\cdot \tan\frac{\pi }{2(N+1)} \cdot (1-\frac{j}{N+1} )\cdot \sin \frac{\pi j}{N+1} ,\; j=1,\, \ldots \, ,N, \notag
\end{equation}
and setting $a_1^\epsilon = \frac{a_1^0 + \epsilon}{1 + \epsilon}$, $a_j^\epsilon = \frac{a_j^0}{1 + \epsilon} ,\; j=2, \ldots, N$\;, we have
\mbox{ $a_1^\epsilon + \cdots + a_N^\epsilon = 1$} and
 \begin{equation}
 \frac{-1}{\cot^2\frac{\pi}{2(N+1)}} = \lim_{\epsilon \rightarrow 0^+}  \inf_{t \in [0, \pi)} \left\{\sum_{j=1}^{N}a_j^\epsilon \cos jt : \sum_{j=1}^{N}a_j^\epsilon \sin jt = 0\right\}\;.\notag
 \end{equation}
   \end{thm}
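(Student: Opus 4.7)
The plan is to prove the two inequalities separately: the sup is at least $-\tan^2(\pi/(2(N+1)))$ (achievability via $a^{\epsilon}$) and at most this value (optimality via Fej\'er).

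\emph{For achievability}, I would first verify, by direct computation, the two numerical identities $\sum_{j=1}^N a_j^0 = 1$ and $\sum_{j=1}^N a_j^0(-1)^j = -\tan^2(\pi/(2(N+1)))$. Both reduce to geometric-sum manipulations: writing $\alpha = \pi/(N+1)$ and using $e^{i(N+1)\alpha} = -1$, take imaginary parts of $\sum_{j=1}^N(\pm e^{i\alpha})^j$ and its weighted analogue $\sum_{j=1}^N j(\pm e^{i\alpha})^j$ (via differentiation of $\sum z^j$). Next, I would establish the key structural fact that $S^0(t) := \sum a_j^0 \sin jt \geq 0$ on $[0,\pi]$. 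Using product-to-sum one obtains the representation
\[
S^0(t) = \tfrac{1}{2}\tan(\alpha/2)\,\bigl[K_N(t-\alpha) - K_N(t+\alpha)\bigr],
\]
where $K_N$ is the Fej\'er kernel. Plugging in $K_N(\theta) = \sin^2((N+1)\theta/2)/[(N+1)\sin^2(\theta/2)]$ and the identity $\sin((N+1)(t\pm\alpha)/2) = \pm\cos((N+1)t/2)$ reduces the nonnegativity of $S^0$ to the elementary inequality $|\sin((t-\alpha)/2)| \leq |\sin((t+\alpha)/2)|$ on $[0,\pi]$, which follows by comparing arguments in $[0,\pi/2]$ case-by-case. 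Since $S^0 \geq 0$ translates to $V^0(x) := \sum a_j^0 U_{j-1}(x) \geq 0$ on $[-1,1]$, the perturbation $V^{\epsilon} = (V^0 + \epsilon)/(1+\epsilon)$ is strictly positive on $[-1,1]$ for every $\epsilon > 0$. Therefore the constraint set of $a^{\epsilon}$ in $[0,\pi]$ is exactly $\{0,\pi\}$, and the constrained infimum equals $\sum a_j^{\epsilon}(-1)^j = (-\tan^2(\alpha/2) - \epsilon)/(1+\epsilon) \to -\tan^2(\alpha/2)$ as $\epsilon \to 0^+$, giving $\sup_a \inf_t \geq -\tan^2(\pi/(2(N+1)))$.

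\emph{For optimality}, suppose for contradiction that some $a$ with $\sum a_j = 1$ satisfies $\sum a_j \cos jt > -\tan^2(\alpha/2)$ whenever $\sum a_j \sin jt = 0$. Setting $M = \cot^2(\alpha/2)$ and $G(z) = 1 + M\sum a_j z^j$, the hypothesis becomes $G(e^{it}) \notin (-\infty,0]$ for all $t$. Since $G(0)=1>0$, the winding number of $G|_{\partial\mathbb{D}}$ about $0$ vanishes, so $G$ has no zeros in $\overline{\mathbb{D}}$, and $\sqrt{G}$ extends holomorphically to a neighborhood of $\overline{\mathbb{D}}$ with positive real part and value $1$ at the origin. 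The goal is to derive a contradiction with $G(1) = 1+M = \csc^2(\alpha/2)$. The intended route is to reduce to Fej\'er's coefficient estimate $|\lambda_1| \leq 2\cos(\pi/(n+2))$ for a suitable nonnegative trigonometric polynomial of some degree $n$ related to $N$, as indicated by the paper's remark that the bound is ``equivalent to the first inequality'' of Fej\'er.

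The main obstacle is precisely this last step: producing the right nonnegative trigonometric polynomial. The naive candidate $|G(e^{it})|^2$ is nonnegative and of degree $N$, but Fej\'er applied to it involves the autocorrelation $\sum a_k a_{k+1}$ and $\|a\|^2$ in a way that does not directly isolate $M$. Using $2\,\mathrm{Re}\sqrt{G(e^{it})} \geq 0$ produces an infinite cosine series rather than a trigonometric polynomial. I expect the correct reduction to exploit the factorization $G(z) = \prod_{k=1}^N(1-z/\zeta_k)$ with $|\zeta_k|>1$, combined with an extremal/Chebyshev argument showing that $G(1)$ is maximized, subject to $G(0)=1$ and $G(\partial\mathbb{D}) \cap(-\infty,0] = \emptyset$, precisely at the limiting configuration encoded by $a^0$ (where $z=-1$ becomes a boundary root of $G$ and additional tangential contacts with the negative real axis occur at $t=(2k+1)\pi/(N+1)$). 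Identifying this extremal characterization and linking it quantitatively to Fej\'er's classical bound is where the bulk of the technical work lies.
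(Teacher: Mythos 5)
Your achievability half is correct and is essentially a self-contained variant of the paper's: the representation $S^0(t)=\tfrac12\tan\frac{\alpha}{2}\,\bigl[K_N(t-\alpha)-K_N(t+\alpha)\bigr]$ with $\alpha=\pi/(N+1)$ does yield $S^0\ge 0$ on $[0,\pi]$ (the paper instead invokes the Egerv\'ary--Sz\'asz identity for the extremal Fej\'er polynomial), and your $\epsilon$-perturbation forcing the zero set of $S^\epsilon$ down to $\{0,\pi\}$ so that the constrained infimum becomes $C^\epsilon(\pi)\to-\tan^2\frac{\pi}{2(N+1)}$ is exactly the paper's closing step. This gives the inequality $\sup\inf\ge-\tan^2\frac{\pi}{2(N+1)}$.

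The optimality half is a genuine gap, and you have correctly diagnosed where it sits: nothing in your proposal actually produces a nonnegative trigonometric \emph{polynomial} of controlled degree to which Fej\'er's bound $|\lambda_1|\le 2\cos\frac{\pi}{n+2}$ can be applied. The winding-number/$\sqrt{G}$ setup cannot close the argument because $\sqrt{G}$ is not a polynomial, so Carath\'eodory-type coefficient bounds for positive-real-part functions give no control on $G(1)$; and $|G(e^{it})|^2$, as you note, entangles $M$ with autocorrelations of the $a_j$. The paper's route is structurally different. It first relaxes the constraint set from $\{t:\ S(t)=0\}$ to the smaller set $\{\pi\}\cup\{t\in(0,\pi):\ S\ \textup{changes sign at}\ t\}$, so that the relaxed infimum dominates the true one for each fixed $a$; it then shows the relaxed supremum is \emph{attained} (upper semicontinuity plus a uniform bound $\min\{C:S=0\}\le-2^{-N}$ that confines any optimizer to a compact coefficient ball); and then---this is the heart of the matter, Lemma 5 with its three cases built on a real Bezout-type factorization of conjugate trigonometric pairs---it shows by perturbing the sign-change locations that an optimal $S^0$ can have \emph{no} sign change in $(0,\pi)$. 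Only at that point does Fej\'er enter: writing $S^0(t)=\sin t\,\bigl(\gamma_1+2\gamma_2\cos t+\cdots+2\gamma_N\cos(N-1)t\bigr)$ with the cosine factor nonnegative of degree $N-1$, Fej\'er's inequality gives $|\gamma_2|\le\cos\frac{\pi}{N+1}\,|\gamma_1|$, while normalization gives $\gamma_1+\gamma_2=1$ and the objective is $C^0(\pi)=-\gamma_1+\gamma_2$; maximizing under these constraints yields exactly $-\tan^2\frac{\pi}{2(N+1)}$. Without the relaxation-plus-no-sign-change mechanism (or some substitute for it), your contradiction scheme does not terminate, so the proposal as written establishes only one of the two inequalities.
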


  Having computed $\tilde{\mu}_2$, we recognize that Theorems 3 and 4 are immediate consequences of

  \begin{thm}\label{thm6}
  $$\frac{-1}{N^2} = \sup_{a_1 + \cdots + a_N = 1}  -\left(\inf_{t\in[0,2\pi)}\left\{\sum_{j=1}^N a_j \sin(2j-1)t : \sum_{j=1}^N a_j\cos(2j-1)t = 0\right\}\right)^2\;.
  $$   Moreover, defining $a_1^0, \ldots, a_N^0$ by
  \begin{equation}
a_j^0 =\frac{2(N-j)+1}{N^{2} } ,\; j=1,\, \ldots \, ,N,\notag
\end{equation}
and setting  $a_1^\epsilon = \frac{a_1^0 + \epsilon}{1 + \epsilon}$, $a_j^\epsilon = \frac{a_j^0}{1 + \epsilon} ,\; j=2, \ldots, N$\;, we have
\mbox{ $a_1^\epsilon + \cdots + a_N^\epsilon = 1$} and
$$\frac{-1}{N^2} = \lim_{\epsilon \rightarrow 0+}  -\left(\inf_{t\in[0,2\pi)}\left\{\sum_{j=1}^N a_j^\epsilon \sin(2j-1)t : \sum_{j=1}^N a_j^\epsilon\cos(2j-1)t = 0\right\}\right)^2\;.
  $$
 \end{thm}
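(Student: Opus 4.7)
The plan is to split Theorem~\ref{thm6} into two matching bounds.

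\textbf{Attainability.} I first check $\sum_j a_j^0 = 1$ via $\sum_{j=1}^N(2(N-j)+1) = N^2$, then establish the closed-form identity $P^0(t) = \cos t \cdot \sin^2(Nt)/(N^2 \sin^2 t)$, which follows by summation by parts with weights $2N-1-2k$ applied to the standard partial sum $\sum_{k=0}^{N-1}\sin((2k+1)t) = \sin^2(Nt)/\sin t$. This factorization exhibits the zeros of $P^0$ in $[0, 2\pi)$: simple zeros at $\pi/2, 3\pi/2$ (from $\cos t = 0$) and double zeros at $t = k\pi/N$ for $k = 1, \ldots, 2N-1$, $k \neq N$ (from $\sin^2(Nt)$, with $\sin t \neq 0$ there). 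A direct alternating-sum computation gives $Q^0(\pi/2) = 1/N$. For the perturbation $a^\epsilon$, one has $P^\epsilon(t) = (P^0(t) + \epsilon\cos t)/(1+\epsilon)$; the zero at $\pi/2$ is preserved since $\cos(\pi/2) = 0$, while at each double zero $k\pi/N$ the quadratic local behavior of $P^0$ together with the nonzero shift $\epsilon\cos(k\pi/N)/(1+\epsilon)$ of appropriate sign eliminate those real zeros for small $\epsilon > 0$. Hence $|Q^\epsilon(\pi/2)| = (1/N + \epsilon)/(1+\epsilon) \to 1/N$, yielding the desired limit.

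\textbf{Universal lower bound.} For arbitrary $a_1, \ldots, a_N$ with $\sum a_j = 1$ I must prove $\max\{|Q(t)| : P(t) = 0\} \geq 1/N$. Multiplying the generating polynomial $\sum a_j e^{i(2j-1)t}$ by $e^{-iNt}$ and taking real parts yields the identity
\[P(t)\cos(Nt) + Q(t)\sin(Nt) = H_N(t), \qquad H_N(t) := \sum_{j=1}^N a_j \cos((2j-1-N)t),\]
so at any zero $t^*$ of $P$ with $\sin(Nt^*) \neq 0$ one has $Q(t^*) = H_N(t^*)/\sin(Nt^*)$. It thus suffices to find a zero $t^*$ of $P$ where $K(t) := N^2 H_N(t)^2 - \sin^2(Nt)$ is non-negative. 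I plan to invoke Fej\'er's classical inequality $\max g \leq n+1$ for non-negative cosine polynomials $g$ of degree $n$ with constant term $1$, applied to an auxiliary non-negative cosine polynomial built from $K(t)$ and a correction of the form $c(t)P(t)^2$. The form of $c$ is suggested by the extremal saturation: for $a^0$ we have $K^0(t) = (\cos t \cdot \sin(Nt)/\sin t)^2 \geq 0$ with equality exactly at the zeros of $P^0$, and $(NH_N^0(t))^2 = N \cdot F_{N-1}(2t)$ connects $H_N^0$ to the Fej\'er kernel $F_{N-1}$, whose maximum $N$ saturates Fej\'er's bound.

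\textbf{Main obstacle.} The critical difficulty is the construction of this auxiliary polynomial: the candidate $t^*$ depends on $a$, and the correction $c(t) P(t)^2$ must preserve global non-negativity while forcing $K(t^*) \geq 0$ at some zero of $P$. My intended route is a Fej\'er--Riesz spectral factorization applied to a polynomial related to $A(z) = \sum_{j=1}^N a_j z^{j-1}$ to produce $c$, followed by the Fej\'er maximum inequality to extract the sharp bound $1/N$ uniformly in $a$.
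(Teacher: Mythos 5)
Your attainability argument is essentially the paper's: the identity $P^0(t)=\cos t\cdot\sin^2(Nt)/(N^2\sin^2 t)$ is exactly the Fej\'er-kernel construction the paper uses, and the $\epsilon$-perturbation (the paper handles it more simply by noting $P^\epsilon(t)=P^0(t)/(1+\epsilon)+\epsilon\cos t/(1+\epsilon)>0$ on $(0,\pi/2)$, rather than tracking each double zero) yields $S^\epsilon(\pi/2)\to 1/N$. That half is sound.

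The universal lower bound, however, is a plan rather than a proof, and the gap sits exactly at the decisive step. Your identity $P(t)\cos Nt+Q(t)\sin Nt=H_N(t)$ is correct, and it would indeed suffice to exhibit a zero $t^*$ of $P$ with $N^2H_N(t^*)^2\ge\sin^2(Nt^*)$; but you never construct the auxiliary nonnegative polynomial $K(t)+c(t)P(t)^2$ --- you only say its form is ``suggested by the extremal saturation'' and that you ``plan to invoke'' Fej\'er's inequality after a Fej\'er--Riesz factorization. Nothing in the sketch shows such a $c$ exists for arbitrary admissible $(a_j)$, nor that the resulting polynomial has the degree and normalization needed for Fej\'er's bound to return exactly $1/N$; you also leave unaddressed the zeros of $P$ at which $\sin(Nt^*)=0$ (there $H_N(t^*)=0$ as well and your formula for $Q(t^*)$ is indeterminate) and the nonemptiness of the zero set of $P$ (which the paper settles in Section 2 via the open mapping theorem). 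The paper avoids all of this by a different, variational route: Lemmas \ref{l6} and \ref{l7} (using the factorization Lemma 1 and a perturbation of the sign-change locations) show that an optimal $C$ has no sign change in $(-\pi/2,\pi/2)$, reducing the problem to nonnegative polynomials $C(t)=\cos t\,(\gamma_1+2\gamma_2\cos 2t+\cdots+2\gamma_N\cos 2(N-1)t)$ with $S(\pi/2)=\gamma_1$, after which the classical Fej\'er maximum inequality $g\le n+1$ applied to $g=(\gamma_1+2\gamma_2\cos 2t+\cdots)/\gamma_1$ at $t=0$ gives $\gamma_1\ge 1/N$ immediately. As written, the sharpness direction of the theorem remains unproven in your argument; you must either carry out the construction of $c(t)$ explicitly or supply a structural reduction of the kind the paper uses.
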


\section{Factorization of Conjugate Trigonometric Polynomials}

The following lemma, in many respects a real analogue of Bezout's theorem, will in  subsequent sections be of considerable use to us in proving Theorems 5 and 6.

\newtheorem{lem}{Lemma}
\begin{lem}

Let $$C(t) = \sum_{j=1}^n a_j \cos jt\;,\;\;\;\;S(t) = \sum_{j=1}^n a_j \sin jt$$
be a pair of conjugate trigonometric polynomials with real coefficients.   Moreover, suppose that
$$ S(t_1) = \cdots = S(t_m) = 0\;,\;\;\;\;\;C(t_1) = \cdots = C(t_m) = \gamma\;,$$
where $t_1\;, \ldots, t_m$ lie in the interval $(0,\pi)$ and $2m \leq n$.  Then the trigonometric polynomials $C(t)$ and $S(t)$ admit the presentation
$$
C(t) = \gamma + \prod_{j=1}^{m}(\cos t - \cos t_j) \sum_{k=m}^{n-m}\alpha_k \cos kt\;\;,\;\; S(t) = \prod_{j=1}^m(\cos t - \cos t_j)\sum_{k=m}^{n-m}\alpha_k \sin kt\;,$$
where $\alpha_m = -2^m \gamma$ and the coefficients $\alpha_m , \ldots, \alpha_{n-m}$ can be uniquely expressed in terms of $\gamma , a_1, \ldots, a_n$.
\end{lem}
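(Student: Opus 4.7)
The plan is to lift the trigonometric problem to the unit circle via the complex polynomial $f(z) = \sum_{j=1}^n a_j z^j$, for which $f(e^{it}) = C(t) + iS(t)$. The hypotheses then read $f(e^{it_j}) = \gamma$ for $j = 1, \ldots, m$; since the $a_j$ are real, $f(\overline{z}) = \overline{f(z)}$, so $f(e^{-it_j}) = \overline{\gamma} = \gamma$ as well. As the $t_j$ are distinct points in $(0,\pi)$, the $2m$ numbers $e^{\pm it_j}$ are distinct, and since $2m \leq n = \deg f$, we may factor
\[
f(z) - \gamma \;=\; g(z)\prod_{j=1}^m (z - e^{it_j})(z - e^{-it_j}) \;=\; g(z)\prod_{j=1}^m\bigl(z^2 - 2z\cos t_j + 1\bigr)
\]
for a polynomial $g(z)$ of degree $n - 2m$. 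Since $f(z) - \gamma$ and the monic denominator both have real coefficients, $g$ has real coefficients as well; write $g(z) = \sum_{\ell=0}^{n-2m}\beta_\ell z^\ell$ with $\beta_\ell \in \mathbb{R}$.

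Next I restrict to $z = e^{it}$ and use the key identity
\[
(e^{it} - e^{it_j})(e^{it} - e^{-it_j}) \;=\; e^{2it} - 2e^{it}\cos t_j + 1 \;=\; 2e^{it}(\cos t - \cos t_j),
\]
which yields
\[
f(e^{it}) - \gamma \;=\; 2^m\, e^{imt} g(e^{it}) \prod_{j=1}^m (\cos t - \cos t_j) \;=\; \prod_{j=1}^m (\cos t - \cos t_j)\sum_{k=m}^{n-m}\alpha_k\, e^{ikt},
\]
where $\alpha_k := 2^m \beta_{k-m}$ for $k = m, \ldots, n-m$. Because the $\alpha_k$ and the prefactor $\prod_j(\cos t - \cos t_j)$ are real, separating real and imaginary parts of this single identity delivers precisely the claimed representations of $C(t) - \gamma$ and $S(t)$.

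To pin down $\alpha_m$, I evaluate the factored identity at $z = 0$: the product $\prod_j(z^2 - 2z\cos t_j + 1)$ equals $1$ there and $f(0) - \gamma = -\gamma$, so $\beta_0 = -\gamma$ and hence $\alpha_m = 2^m \beta_0 = -2^m \gamma$. Uniqueness of the $\alpha_k$ is immediate from uniqueness of polynomial division in $\mathbb{R}[z]$; the $\beta_\ell$ (and hence the $\alpha_k$) can be extracted algorithmically from $\gamma$ and $a_1, \ldots, a_n$, since the numbers $\cos t_j$ themselves are determined by that data through $f(e^{it_j}) = \gamma$. The only mild subtlety in the argument is verifying the reality of the quotient $g(z)$, which, as noted, is forced by the conjugate pairing of the roots $e^{\pm it_j}$; once this is in hand the rest is routine bookkeeping on the unit circle.
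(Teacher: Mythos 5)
Your proof is correct and follows essentially the same route as the paper: both pass to the polynomial $f(z)-\gamma$ (the paper's $F(z)$), observe that $e^{\pm it_j}$ are roots, factor out $\prod_j(z^2-2z\cos t_j+1)=2^m z^m\prod_j(\cos t-\cos t_j)$ on the unit circle, and take real and imaginary parts, identifying $\alpha_m=-2^m\gamma$ from the constant term. Your explicit remark on the reality of the quotient $g$ is a point the paper leaves implicit, but the argument is the same.
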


\begin{proof}
Consider the algebraic polynomial
$$F(z) = -\gamma + \sum_{j=1}^n \alpha_j z^j\;.$$
Note that
$$C(t)= \gamma + \mathfrak{R}\left\{F(e^{it})\right\}\;\textup{,}\;S(t) = \mathfrak{I}\left\{F(e^{it})\right\}\;.$$

By the Fundamental Theorem of Algebra, $F(z)$ has $n$ roots, and $2m$ of these are provided by $e^{it_1}, \ldots, e^{it_j}$ and $e^{-it_1}, \ldots, e^{-it_j}$. Accordingly, there exist numbers $\beta_1\;\textup{,}\ldots\textup{,}\beta_{n-2m}$ such that
$$F(z) = \left(\prod_{j=1}^{m}\left(z - e^{it_j}\right)\left(z - e^{-it_j}\right)\right)\left(-\gamma + \sum_{k=1}^{n - 2m}\beta_k z^k\right)\;.$$

Observe that
\begin{align}F(z) = \prod_{j=1}^m\left(z - e^{it_j}\right)\left(z - e^{-it_j}\right) &= \prod_{j=1}^m\left(z^2 - 2z \cos t_j + 1\right)\notag \\& = 2^m z^m\prod_{j=1}^m\left(\frac{1}{2}\left(z + \frac{1}{z}\right) - \cos t_j\right)\;.\notag \end{align}  Hence
$$F(z) = \prod_{j=1}^m\left(\frac{1}{2}\left(z + \frac{1}{z}\right) - \cos t_j\right)\left(-2^m\gamma z^m + 2^m \sum_{k=1}^{n-2m}\beta_k z^{m+k}\right)\;.$$
Hence
$$\mathfrak{R}\left\{F(e^{it})\right\} = \prod_{j=1}^m\left(\cos t - \cos t_j\right)\left(-2^m\gamma \cos mt + 2^m \sum_{k = m+1}^{n-m}\beta_{k-m}\cos kt\right)\;\textup{,}$$
$$\mathfrak{I}\left\{F(e^{it})\right\} = \prod_{j=1}^m\left(\cos t - \cos t_j\right)\left(-2^m\gamma \sin mt + 2^m \sum_{k = m+1}^{n-m}\beta_{k-m}\sin kt\right)\;\textup{,}$$
implying the desired result with $\alpha_m = -2^m\gamma$ and $\alpha_{m+k} = 2^m \beta_k$ for $k=1, \ldots, n-2m$.
\end{proof}

\section{Nonlocal Separation from Zero}
 Recall that the polynomimal $q(\lambda)$ is defined by $q(\lambda) = a_1 \lambda^{N-1} + \cdots + a_{N-1}\lambda + a_N$.   For technical reasons that will arise in the next section we will need a uniform lower bound of the distance between $q(\partial \mathbb{D})$ and the origin that holds for all $a_1 , \ldots, a_N$ such that $a_1 + \cdots a_N = 1$.   This lower bound is provided by the following.

 \begin{lem}
 Let $F(z) = a_1 z + \cdots + a_n z^n$, where $a_j \in \mathbb{C}$ for each $j$.   Then the set $F(\mathbb{D})$ contains a disc centered at the origin with radius $\frac{1}{2^n}\sum_{j=1}^n |a_j|$.
 \end{lem}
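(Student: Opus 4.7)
The plan is to establish the contrapositive: if $w_0 \in \mathbb{C}$ is omitted from $F(\mathbb{D})$, so that every root of $F(z) - w_0$ satisfies $|z| \geq 1$, then $|w_0| \geq \frac{1}{2^n}\sum_{j=1}^n |a_j|$. Since $F(0) = 0$, the value $w_0 = 0$ is automatically in $F(\mathbb{D})$, so I may assume throughout that $w_0 \neq 0$.

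The key device will be the reciprocal polynomial
$$
Q(u) \;=\; u^n\bigl(F(1/u) - w_0\bigr) \;=\; -w_0 u^n + a_1 u^{n-1} + a_2 u^{n-2} + \cdots + a_n.
$$
A direct calculation shows that the $n$ roots of $Q$, counted with multiplicity, consist of the reciprocals $1/z_j$ of the nonzero roots $z_j$ of $F(z) - w_0$, together with a root of multiplicity $n - \deg F$ at $u = 0$. Under the standing hypothesis that $|z_j| \geq 1$ for every $j$, each root of $Q$ thus has modulus at most $1$, so all roots of $Q$ lie in $\overline{\mathbb{D}}$.

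From here the bound is nearly immediate. Letting $\sigma_j$ denote the $j$-th elementary symmetric function of the roots of $Q$, Vieta's formulas give $\sigma_j = (-1)^{j+1} a_j / w_0$ for $j = 1, \ldots, n$; and since every root has modulus at most $1$, the triangle inequality yields $|\sigma_j| \leq \binom{n}{j}$, hence $|a_j| \leq |w_0|\binom{n}{j}$. Summing over $j$,
$$
\sum_{j=1}^n |a_j| \;\leq\; |w_0| \sum_{j=1}^n \binom{n}{j} \;=\; |w_0|(2^n - 1) \;<\; |w_0|\cdot 2^n,
$$
which is the desired inequality on $|w_0|$.

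The only step requiring real care is confirming the argument in the degenerate case where some of the $a_j$ vanish, so that $\deg F < n$ and $Q$ picks up extra roots at $u = 0$. These spurious roots have modulus $0 \leq 1$, so they remain compatible with the root-location hypothesis and only diminish the symmetric-function estimates. With that observation in place the proof is purely mechanical, and the constant $1/2^n$ in the statement is simply a repackaging of the binomial identity $\sum_{j=1}^n \binom{n}{j} = 2^n - 1$.
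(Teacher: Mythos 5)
Your proof is correct and follows essentially the same route as the paper's: pass to the reciprocal polynomial $u^n(F(1/u)-w_0)$, note all its roots lie in $\overline{\mathbb{D}}$, and apply Vieta's formulas with the bound $|\sigma_j|\le\binom{n}{j}$ to get $\sum_j|a_j|\le(2^n-1)|w_0|$. The only differences are cosmetic — you handle the degenerate case $\deg F<n$ and the value $w_0=0$ explicitly, which the paper glosses over.
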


\begin{proof}
Let $\gamma \notin F(\mathbb{D})$.   Observe that if $z \in \mathbb{D}$, $|F(z)| \leq \sum_{j=1}^n|a_j|$, so such a value of $\gamma$ indeed does exist.    Note  that the polynomial $F(z) - \gamma$ does not have a root inside of $\mathbb{D}$.  Hence all of the roots of the polynomial $z^n(F(\frac{1}{z}) - \gamma)$  lie within $\bar{\mathbb{D}}$.  Now, $z^n(F(\frac{1}{z}) - \gamma)$ may be expressed as
\begin{align}
z^n(F(\frac{1}{z}) - \gamma) &= -\gamma z^n + a_1 z^{n-1} + \cdots + a_n \notag
\\&= -\gamma\left(z^n - \frac{a_1}{\gamma}z^{n-1} - \cdots - \frac{a_n}{\gamma}\right)\;.\notag
\end{align}
Applying Vieta's theorem to the polynomial in parentheses, we obtain the estimate
$$\left|\frac{a_j}{\gamma}\right| \leq  \binom{n}{j} \; j=1, \ldots, n$$
that in turn implies $\sum_{j=1}^n \left|\frac{a_j}{\gamma}\right| \leq 2^n -1$ and hence
$|\gamma| \geq \frac{1}{2^n - 1}\sum_{j=1}^n |a_j|$.
\end{proof}

\begin{lem}
Let $C(t) = \sum_{j=1}^n a_j \cos jt$ and $S(t) = \sum_{j=1}^n a_j \sin jt$ be conjugate trigonometric polynomials, where $\sum_{j=1}^n a_j = 1$.  Then
$$\min\{C(t) : S(t) = 0\} \leq -\frac{1}{2^n}\;. $$
\end{lem}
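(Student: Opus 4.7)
The plan is to reduce the statement to Lemma 2 via a short topological observation about the image of $\partial \mathbb{D}$ under the polynomial
$$F(z) = \sum_{j=1}^n a_j z^j.$$
The starting point is that $C(t) + i S(t) = F(e^{it})$, so the real values taken by $F$ on $\partial \mathbb{D}$ are precisely the values of $C(t)$ at zeros of $S(t)$; hence the task reduces to exhibiting a point of $F(\partial \mathbb{D})$ on the ray $(-\infty, -1/2^n]$.

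First I would note that the hypothesis $\sum_{j=1}^n a_j = 1$ forces $\sum_{j=1}^n |a_j| \geq 1$ by the triangle inequality, so Lemma 2 yields the inclusion
$$\{w \in \mathbb{C} : |w| < 1/2^n\} \subseteq F(\mathbb{D}).$$

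Next I would chase the negative real ray. Define $\rho = \sup\{s \geq 0 : -s \in F(\mathbb{D})\}$. Since $F(\mathbb{D})$ contains the open disc of radius $1/2^n$, we have $\rho \geq 1/2^n$, and since $F(\bar{\mathbb{D}})$ is compact, $\rho < \infty$. The point $-\rho$ lies in $\overline{F(\mathbb{D})} = F(\bar{\mathbb{D}})$ by closure, while by the open mapping theorem it cannot lie in $F(\mathbb{D})$ (otherwise a whole neighborhood of $-\rho$ would sit in $F(\mathbb{D})$, contradicting the maximality of $\rho$). Therefore $-\rho \in F(\partial \mathbb{D})$, so there is some $t^\ast$ with $F(e^{it^\ast}) = -\rho$. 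Reading off real and imaginary parts yields $S(t^\ast) = 0$ and $C(t^\ast) = -\rho \leq -1/2^n$, as desired.

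The substantive content of the argument is concentrated in Lemma 2; once that estimate is in hand, the remaining step is the topological isolation of a negative real value in $F(\partial \mathbb{D})$ via the open mapping theorem. The only point requiring a little care is verifying that $-\rho$ is actually attained on the boundary circle rather than in the interior of $F(\mathbb{D})$, and this is exactly where open mapping is invoked. I do not anticipate any further obstacle.
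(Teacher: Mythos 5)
Your proof is correct and follows the same route the paper intends: the paper's proof of this lemma is the one-line remark that it ``follows immediately from the previous lemma, considering the values of the associated $F$ on $\partial \mathbb{D}$,'' and your argument simply supplies the details of that reduction. The supremum/open-mapping step you use to locate $-\rho$ on $F(\partial\mathbb{D})$ is exactly the right way to make the paper's remark rigorous.
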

\begin{proof}
This follows immediately from the previous lemma, considering the values of the associated $F$ on $\partial \mathbb{D}$.
\end{proof}
\section{The $T = 1$ case}
In this section we prove Theorem 5 and obtain Theorems 1 and 2 as corollaries.  The strategy involves first noting that $$ \sup_{a_1 + \cdots + a_N = 1} \inf \left\{\sum_{j=1}^{N}a_j \cos jt : \sum_{j=1}^{N}a_j \sin jt = 0\right\}$$
is bounded above by
$$\sup_{a_1 + \cdots + a_N = 1} \inf \left\{\sum_{j=1}^N a_j \cos jt : t = \pi \;\textup{ or }\sum_{j = 1}^N a_j \sin jt\;\;\textup{ changes sign at }t\right\}$$
and then showing that this latter supremum equals $-\tan^2 \frac{\pi}{2(N+2)}$.  We will do this by seeing that the polynomial $S(t)$ generating the desired supremum is of the form $S(t) = \sin(t) P(t)$, where $P(t)$ is a nonnegative trigonometric polynomial associated to an optimization problem related to an inequality due to Fej\'er.  We conclude the proof by showing that, defining $a_1^0, \ldots, a_N^0$ by
    \begin{equation}
a_{j}^0 = 2\cdot \tan\frac{\pi }{2(N+1)} \cdot (1-\frac{j}{N+1} )\cdot \sin \frac{\pi j}{N+1} ,\; j=1,\, \ldots \, ,N,
\notag \end{equation}
and setting $a_1^\epsilon = \frac{a_1^0 + \epsilon}{1 + \epsilon}$, $a_j^\epsilon = \frac{a_j^0}{1 + \epsilon} ,\; j=2, \ldots, N$\;, we have
\mbox{ $a_1^\epsilon + \cdots + a_N^\epsilon = 1$} and
 \begin{equation}
 \frac{-1}{\cot^2\frac{\pi}{2(N+1)}} = \lim_{\epsilon \rightarrow 0^+}  \inf_{t \in [0, \pi)} \left\{\sum_{j=1}^{N}a_j^\epsilon \cos jt : \sum_{j=1}^{N}a_j^\epsilon \sin jt = 0\right\}\;.\notag \end{equation}
\\

Given $a_1$, \ldots $a_N$ such that $a_1 + \cdots + a_N = 1$, define the associated pair of conjugate trigonometric polynomials
$$C(t) = \sum_{j=1}^N a_j \cos jt\;,\;\;\;S(t) = \sum_{j=1}^N a_j \sin jt\;.$$
 The function $\rho_1 (a_1, \ldots, a_N)$ is given by
$$\rho_1(a_1, \ldots, a_N) = \min\left\{C(t) : t \in \mathcal{T} \cup \{\pi\}\right\}\;,$$
where $\mathcal{T}$ is the set of points in $(0,\pi)$ where $S(t)$ changes sign.

\begin{lem}\label{l4}
There exists a conjugate pair of trigonometric polynomials $\left(C^0(t) , S^0(t)\right)$, the sum of the  coefficients of either polynomial being 1,  such that
$$\sup_{(a_{1}, \ldots, a_{N}) : \atop a_1 + \cdots + a_N = 1}\{\rho_1(a_1, \ldots, a_N)\} = \min\{C^0(t) : t \in \mathcal{T}^0\cup\{\pi\}\}\;,$$ where $\mathcal{T}^0$ is the set of points in  $(0,\pi)$ such that the function $S^0(t)$ changes sign.

\end{lem}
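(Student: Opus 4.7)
My plan is to establish the lemma by a compactness argument: I would show (i) that $\rho_1$ is coercive on $\{a \in \mathbb{R}^N : a_1 + \cdots + a_N = 1\}$, so that any maximizing sequence is bounded, and (ii) that $\rho_1$ is upper semi-continuous, so the limit of a convergent maximizing subsequence attains the supremum. For (i), I would strengthen Lemma 3 to the inequality
\[
\rho_1(a_1, \ldots, a_N) \;\leq\; -\frac{\sum_{j=1}^N |a_j|}{2^N - 1}.
\]
Applying Lemma 2 to $F(z) = a_1 z + \cdots + a_N z^N$ gives $D(0, r) \subset F(\mathbb{D})$ with $r = \sum |a_j|/(2^N - 1)$. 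Since $F(0) = 0$, the argument principle ensures that the winding number of the closed curve $F(\partial \mathbb{D})$ about every $\gamma \in D(0, r) \setminus F(\partial \mathbb{D})$ is at least $1$. As a real point $p$ slides along the negative real axis from the interior of $D(0, r)$ out to $-\infty$, the winding must drop from $\geq 1$ to $0$; each unit drop forces a \emph{transversal} crossing of $F(\partial \mathbb{D})$ with the real axis, and any such crossing produces a \emph{simple} zero of $S$, i.e., a sign change. By the conjugate symmetry $F(\bar z) = \overline{F(z)}$, the corresponding parameter may be taken in $(0, \pi]$, placing it in $\mathcal{T} \cup \{\pi\}$ with $C$-value $\leq -r$; this establishes the estimate.

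Setting $\rho^* = \sup\{\rho_1(a) : \sum a_j = 1\}$, the estimate combined with $\sum |a_j| \geq 1$ yields $\rho^* \leq -1/(2^N - 1) < 0$, so $\rho^*$ is finite. Any maximizing sequence $\{a^{(k)}\}$ satisfies $\sum|a_j^{(k)}| \leq (2^N - 1)|\rho_1(a^{(k)})|$, eventually bounded; extracting a subsequence, $a^{(k)} \to a^0$ with $\sum a_j^0 = 1$. I then verify upper semi-continuity of $\rho_1$ at $a^0$. For each $t_* \in \mathcal{T}^0 \cup \{\pi\}$ I produce $t_k \in \mathcal{T}^{(k)} \cup \{\pi\}$ with $t_k \to t_*$ and $C^{(k)}(t_k) \to C^0(t_*)$: if $t_* = \pi$, simply take $t_k = \pi$; if $t_* \in (0, \pi)$ is a sign change of $S^0$, then $S^0(t_* - \delta)\,S^0(t_* + \delta) < 0$ for some small $\delta > 0$, and by uniform convergence the same strict inequality holds for $S^{(k)}$ for $k$ large, so the intermediate value theorem produces a sign change of $S^{(k)}$ in $(t_* - \delta, t_* + \delta)$; take $t_k$ to be the closest such sign change to $t_*$. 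In either case $\rho_1(a^{(k)}) \leq C^{(k)}(t_k)$, so $\limsup_k \rho_1(a^{(k)}) \leq C^0(t_*)$, and taking the infimum over $t_*$ gives $\limsup_k \rho_1(a^{(k)}) \leq \rho_1(a^0)$. Since $\rho_1(a^0) \leq \rho^*$ by feasibility, $\rho_1(a^0) = \rho^*$, and $(C^0(t), S^0(t)) = (\sum_j a_j^0 \cos jt, \sum_j a_j^0 \sin jt)$ is the extremal pair required.

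The main obstacle is the coercivity estimate: Lemma 3 bounds $\min\{C(t) : S(t) = 0\}$, whereas the definition of $\rho_1$ demands the bound only over true sign changes of $S$ (plus $\pi$). The winding-number argument supplies this by forcing the leftmost real-axis crossing of $F(\partial \mathbb{D})$ to be transversal, but making the argument fully rigorous requires care with degenerate curve configurations (e.g.\ tangencies, self-intersections, or roots of $F$ on $\partial \mathbb{D}$), which are handled by a small perturbation of $a$ to generic coefficients and passing to the limit.
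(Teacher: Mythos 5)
Your strategy is the same as the paper's: an extremal pair is produced by combining upper semi-continuity of $\rho_1$ with an a priori bound forcing near-maximizers to have $\sum_j|a_j|$ bounded (the paper restricts to the compact slices $A_R=\{\sum a_j=1,\ \sum|a_j|\le R\}$, invokes Weierstrass, and then argues the value is independent of $R>2^N$ by comparison with $a=(1,0,\dots,0)$, for which $\rho_1=-1$). You are, however, more careful than the paper on exactly the two points it glosses over. First, the paper asserts upper semi-continuity without proof; your argument --- each sign change $t_*$ of $S^0$ persists for nearby coefficient vectors because $S^0(t_*-\delta)S^0(t_*+\delta)<0$ is an open condition, while $t=\pi$ is always admissible --- is the correct justification and uses only the direction that actually holds (sign changes cannot disappear in the limit, though new ones may appear). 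Second, you correctly observe that Lemma 3 controls $\min\{C(t):S(t)=0\}$, which is a \emph{lower} bound for $\rho_1$ since $\mathcal{T}\cup\{\pi\}$ is only a subset of the zero set of $S$; the paper's ``By Lemma 2\dots'' step silently elides the distinction between zeros and sign changes, and your winding-number argument is the right way to close that gap.

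The one sub-step that does not work as written is the disposal of degenerate configurations by ``a small perturbation of $a$ to generic coefficients and passing to the limit.'' Because $\rho_1$ is only upper semi-continuous, knowing $\rho_1(a^\epsilon)\le -r(a^\epsilon)$ at nearby generic $a^\epsilon$ yields $\limsup_\epsilon\rho_1(a^\epsilon)\le -r(a)$ and $\limsup_\epsilon\rho_1(a^\epsilon)\le\rho_1(a)$, neither of which implies the desired $\rho_1(a)\le -r(a)$; semicontinuity points the wrong way for this limit. Fortunately no genericity is needed. For real $p$ off the curve $F(\partial\mathbb{D})$ the winding number $n(p)$ is locally constant, and for $p_2<p_1<0$ the difference $n(p_1)-n(p_2)$ equals the signed count of crossings of the segment $(p_2,p_1)$ by the curve, in which a parameter $t_0$ with $S(t_0)=0$ contributes zero unless $S$ genuinely changes sign at $t_0$ (tangential touches and cancelling pairs contribute nothing). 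Taking $p_1=-r+\delta$ inside the disc furnished by Lemma 2 and $p_2$ outside $F(\overline{\mathbb{D}})$ gives $n(p_1)\ge 1$ and $n(p_2)=0$, hence at least one sign change $t_0$ with $C(t_0)\le p_1$; by the symmetry $S(-t)=-S(t)$ and $C(0)=1>0$ one may take $t_0\in(0,\pi]$, and letting $\delta\to 0^+$ gives the coercivity estimate for every admissible $a$. With that repair your proof is complete.
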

\begin{proof}
We define the set $A_R \subset \mathbb{R}^n$ by
$$A_R = \left\{(a_1, \ldots, a_N) : \sum_{j=1}^N a_j = 1\;, \sum_{j=1}^N |a_j| \leq R\right\}\;.$$   If $R \geq 1$, the function $\rho_1 (a_1, \ldots, a_N)$ is upper semi-continuous on $A_R$.  $A_R$ being a compact set, by the Weierstrass maximum theorem (see, e.g., \cite{borden}) we have that the supremum of $\rho_1$ acting on $A_R$ is achieved on $A_R$.

It remains to show that the this supremum is independent of $R$ for $R$ sufficiently large.  Considering the case that $a_1 = 1$, $a_2 = \cdots = a_N = 0$, we immediately realize that this supremum is greater than or equal to -1.   By Lemma 2, we also realize that this supremum could not be realized by $(a_1, \ldots, a_N)$ such that $\sum_{j=1}^N |a_j| > 2^N$.   Accordingly, we realize the supremum is achieved for some $(a_1, \ldots, a_N) \in A_R$ for any value of $R$ exceeding $2^N$.

\end{proof}

 We shall call a pair of conjugate trigonometric polynomials $(C(t), S(t))$ \emph{optimal} if the pair satisfies the hypotheses of the above lemma.

\begin{lem}\label{l5}
If the polynomial $S(t) = \sum_{j=1}^N a_j \sin jt$ has a sign change in $(0,\pi)$, the associated pair of conjugate polynomials $(C(t), S(t))$ cannot be optimal.
\end{lem}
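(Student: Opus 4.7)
The proof proceeds by contradiction. Assume $(C,S)$ is optimal with value $\rho^* := \rho_1(a_1,\dots,a_N)$ and that $S$ has a sign change at some $t^* \in (0,\pi)$; set $\gamma := C(t^*) \ge \rho^*$. First, apply Lemma 1 with $m=1$ at $t_1 = t^*$: since $S(t^*)=0$ and $C(t^*)=\gamma$, we obtain $S(t) = (\cos t - \cos t^*)\,R(t)$ and $C(t) = \gamma + (\cos t - \cos t^*)\,R_c(t)$, with $(R_c,R)$ a conjugate trigonometric pair of degree at most $N-1$. Because the sign change of $S$ at $t^*$ is simple and $(\cos t - \cos t^*)$ itself changes sign simply there, $R(t^*) \ne 0$; in particular $S'(t^*) \ne 0$. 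This algebraic footing sets up the perturbation analysis.

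Next, consider a perturbation $a_j \mapsto a_j + \varepsilon h_j$ with $\sum_j h_j = 0$, and write $C_h(t) := \sum_j h_j \cos jt$, $S_h(t) := \sum_j h_j \sin jt$. By the implicit function theorem, the sign change of $S + \varepsilon S_h$ near $t^*$ moves to $t^*_\varepsilon = t^* - \varepsilon S_h(t^*)/S'(t^*) + O(\varepsilon^2)$, and a first-order Taylor expansion gives $(C + \varepsilon C_h)(t^*_\varepsilon) = \gamma + \varepsilon\,L_{t^*}(h) + O(\varepsilon^2)$, where $L_{t^*}(h) := C_h(t^*) - (C'(t^*)/S'(t^*))\,S_h(t^*)$ is a linear functional on the feasibility direction space $V := \{h \in \mathbb{R}^N : \sum_j h_j = 0\}$. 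The same analysis applies at every other point of $\mathcal{T}\cup\{\pi\}$ attaining the minimum $\rho^*$, yielding further linear functionals $L_{t_k}$ (and $L_\pi(h) := C_h(\pi) = \sum_j (-1)^j h_j$ at $t=\pi$, if $\pi$ lies in the attaining set, where there is no shift of the zero). A direction $h \in V$ satisfying the strict inequalities $L_{t^*}(h) > 0$, $L_{t_k}(h) > 0$, and (if applicable) $L_\pi(h)>0$ simultaneously produces, for small $\varepsilon > 0$, the improvement $\rho_1(C+\varepsilon C_h,\,S+\varepsilon S_h) > \rho^*$, contradicting optimality.

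The main obstacle is to show that such an $h$ exists. Applying Lemma 1 once more, now to the full collection of minimising sign-change points in $(0,\pi)$ (which share the common values $S=0$ and $C=\rho^*$), gives the bound $2m \le N$, so the attaining set has cardinality at most $N/2 + 1$. By Farkas' lemma the existence of $h$ is equivalent to showing that no nontrivial nonnegative combination of $\{L_{t^*}, L_{t_k}, L_\pi\}$ coincides on $V$ with a scalar multiple of the constant functional $h\mapsto \sum_j h_j$ on $\mathbb{R}^N$. Since the number of these functionals is strictly smaller than $N$ for $N \ge 3$, they span a proper subspace of the dual, and the algebraic constraints exposed by Lemma 1 applied to the whole minimising configuration rule out the degenerate dependence; the very small $N$ cases are checked by hand, and the degenerate subcase $\gamma = 1 = C(0)$, which would force $a_j = 0$ for every $j$ with $\cos jt^* \ne 1$, is excluded separately.
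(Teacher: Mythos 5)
Your overall strategy (perturb and contradict optimality) is in the same spirit as the paper's, but the execution has a genuine gap at exactly the point where the real work lies. The existence of an improving direction $h$ is reduced, via Farkas' lemma, to the claim that no nontrivial nonnegative combination of the active functionals $L_{t^*}, L_{t_k}, L_\pi$ vanishes on $V$. You assert that a dimension count plus ``the algebraic constraints exposed by Lemma 1'' rule out this degeneracy, but you never verify it. A dimension count cannot do the job: even if the functionals span a proper subspace of $V^*$, a nonnegative combination can still vanish on $V$ (for instance if one functional is itself zero on $V$, or if two are opposite there), and this is precisely the situation one must exclude. The paper's proof spends essentially all of its length on this issue: it rewrites $C^0$ and $S^0$ in the factored form from Lemma 1, perturbs the root locations $t_1,\dots,t_m$ directly (renormalizing so the coefficient sum stays $1$), and then carries out explicit sign analyses of $\alpha_m$, $\sum_{k}\alpha_k$, and $\sum_k(-1)^k\alpha_k$ in three separate cases ($C^0(t_1)<C^0(\pi)$, $C^0(\pi)\le C^0(t_1)\le 1$, $C^0(t_1)>1$), the last of which also needs the nonlocal bound of Lemma 2/3. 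None of that case analysis has a counterpart in your argument, so the contradiction is not actually derived.

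A second, smaller problem: you claim $S'(t^*)\ne 0$ ``because the sign change of $S$ at $t^*$ is simple.'' A sign change only forces the zero to have odd order; it may have order $3$ or higher, in which case $R(t^*)=0$, $S'(t^*)=0$, and your implicit-function-theorem computation of $t^*_\varepsilon$ and of the linearization $L_{t^*}$ breaks down. The paper sidesteps this entirely by perturbing the roots inside the factorization rather than tracking zeros of a perturbed coefficient vector. Finally, your treatment of the degenerate subcases (the ``very small $N$'' cases and $\gamma=1$) is deferred without proof. As written, the proposal is a plausible plan rather than a proof.
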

\begin{proof}

Let $(C^0(t), S^0(t))$ be an optimal pair of conjugate polynomials.  We proceed by contradiction.   Suppose that $S^0(t)$ had a sign change in $(0, \pi)$.   Then $\mathcal{T} = \left\{t_1, \ldots, t_q\right\}$ would be a nonempty set consisting of all the zeros of $S^0(t)$ on $(0,\pi)$ where $S^0(t)$ changes sign.    Note that since $S^0(t)$ is a polynomial of degree $n$, we have that $S^0(t)$ has less than or equal to $2n$ roots on $(-\pi, \pi)$ (see, e.g., \cite{powell}), and hence by symmetry considerations we have $q \leq n-1$.  We assume without loss of generality that
$$C^0(t_1) = C^0(t_2) = \cdots = C^0(t_m) < C^0(t_{m+1}) \leq C^0({t_{m+2}}) \leq \cdots \leq C^0({t_q})\;.$$
Now, observe that the polynomial
$$-C^0(t_1) + C(t) + i S(t)$$
has at most $n$ roots on $(-\pi, \pi]$ and hence by symmetry considerations has no more than $n/2$ roots  on $(0,\pi)$.   Hence we have  $2m \leq n$.   So we may apply Lemma 1, yielding the factorizations of $S^0(t)$, $C^0(t)$
$$
C^0(t) = C^0(t_1) + \prod_{j=1}^{m}(\cos t - \cos t_j) \sum_{k=m}^{n-m}\alpha_k \cos kt\;,
$$
$$ S^0(t) = \prod_{j=1}^m(\cos t - \cos t_j)\sum_{k=m}^{n-m}\alpha_k \sin kt\;,$$
where the coefficents $\alpha_m$, \ldots, $\alpha_{n-m}$ are expressed uniquely in terms of $C^0(t_1)$ and the coefficents of $S^0(t)$ and $C^0(t)$, moreover having that $\alpha_m = -2^m C^0(t_1)$.

Note that $\sum_{k=m}^{n-m} \alpha_k \sin kt$ may have zeros other than $t_1, \ldots, t_m$ on the interval $(0,\pi)$, but there are only finitely many of them and on none of them does $C^0$ vanish.
\\

 We now define the functions $S(\theta_1, \ldots, \theta_m; t)$, $C(\theta_1, \ldots, \theta_m;t)$ by
 $$
 S(\theta_1, \ldots, \theta_m; t) = N(\theta_1, \ldots, \theta_m) \cdot  \prod_{j=1}^m(\cos t - \cos \theta_j)\sum_{k=m}^{n-m}\alpha_k \sin kt\;,$$
 $$
 C(\theta_1, \ldots, \theta_m; t) = N(\theta_1, \ldots, \theta_m) \cdot \left(-\frac{\alpha_m}{2^m} + \prod_{j=1}^{m}(\cos t - \cos \theta_j) \sum_{k=m}^{n-m}\alpha_k \cos kt\right)\;,
$$
 where the normalizing factor $N(\theta_1, \ldots, \theta_m)$ is such that, when $S(\theta_1, \ldots, \theta_m; t)$ is expressed in a form of a sine series $\sum_{j=1}^N \tilde{\alpha}_j \sin jt$, we have \mbox{$\sum_{j=1}^N \tilde{\alpha}_j = 1$.}  Observe that we know $S(\theta_1, \ldots, \theta_m; t)$ \emph{can} be expressed in terms of a sine series of this form because of the elementary trigonometric identity
 $$\sin a \cos b = \frac{1}{2}\left[\sin(a+b) + \sin(a-b)\right]\;$$ and the fact that we already know that $S^0(t_1, \ldots, t_m; t)$ may be expressed as the sine series $S^0(t)$.     Also observe that the trigonometric identity
 $$\cos a \cos b = \frac{1}{2}\left[\cos(a+b) + \cos(a-b)\right]\;$$ and the fact that $(C^0(t), S^0(t))$ form a conjugate pair together imply that
 $$C(\theta_1, \ldots, \theta_m; t) = \sum_{j=1}^N \tilde{\alpha}_j \cos jt\;,$$
 telling us that $C(\theta_1, \ldots, \theta_m; t)$ and $S(\theta_1, \ldots, \theta_m;t)$ may be expressed as conjugate trigonometric polynomials in $t$, each having a sum of associated coefficients being 1.

 Note that since $C(\theta_1, \ldots, \theta_m; 0) = 1$, we have
 $$N(\theta_1, \ldots, \theta_m) = \frac{1}{-\frac{\alpha_m}{2^m} + \prod_{j=1}^m (1 - \cos \theta_j)\sum_{k=m}^{n-m}\alpha_k}\;.$$
 This leads to the expressions
 $$
 C(\theta_1, \ldots, \theta_m; t) = \frac{-\frac{\alpha_m}{2^m} + \prod_{j=1}^m(\cos t - \cos \theta_j)\sum_{k=m}^{n-m}\alpha_k \cos kt}{-\frac{\alpha_m}{2^m} + \prod_{j=1}^m(1 - \cos \theta_j)\sum_{k=m}^{n-m}\alpha_k}\;,$$

 $$
 S(\theta_1, \ldots, \theta_m; t) = \frac{\prod_{j=1}^m(\cos t - \cos \theta_j)\sum_{k=m}^{n-m}\alpha_k \sin kt}{-\frac{\alpha_m}{2^m} + \prod_{j=1}^m(1 - \cos \theta_j)\sum_{k=m}^{n-m}\alpha_k}\;.$$

 We recognize that $S(t_1, \ldots, t_m;t)$ changes sign on $(0,\pi)$ exactly at $t_1, \ldots, t_q$, and accordingly $S(\theta_1, \ldots, \theta_m; t)$ changes sign on $(0,\pi)$ exactly at $\theta_1, \ldots, \theta_m, t_{m+1}, \ldots, t_q$, provided that $(\theta_1, \ldots, \theta_m)$ is sufficiently close to $(t_1, \ldots, t_m)$ in $\mathbb{R}^m$.

 The idea now is to show that for some minor perturbation $(\theta_1, \ldots, \theta_m)$ of $(t_1, \ldots, t_m)$, we have \emph{both} $$\min\{C(\theta_1, \ldots, \theta_m; t) : S(\theta_1, \ldots, \theta_m;t) \;\textup{changes sign at}\;t\;, t \in (0, \pi)\}$$ \emph{and} $$C(\theta_1, \ldots, \theta_m; \pi)$$ are larger than  $\min\{C^0(t_1), C^0(\pi)\}$, contradicting the fact that $(C^0(t), S^0(t))$ is an optimal pair.  Note that since $C^0(t_1) < C^0(t_j)$ for $j > m$, there exists $\epsilon > 0$ so that, provided $|t_j - \theta_j| < \epsilon$ for $j=1, \ldots, m$, we will have

$$\min\{C(\theta_1, \ldots, \theta_m; t) : S(\theta_1, \ldots, \theta_m;t) \;\textup{changes sign at}\;t\;, t \in (0,\pi)\} = $$ $$\min\{C(\theta_1, \ldots, \theta_m; \theta_j) : j\in \{1, \ldots, m\}\}\;.$$
\\

 The proof now involves the consideration of three cases: either \newline \mbox{$C^0(t_1) < C^0(\pi)$,} $C^0(\pi) \leq C^0(t_1) \leq 1$, or $C^0(t_1) > 1$.
 \\

\noindent \emph{Case 1: $C^0(t_1) < C^0(\pi)$:}
\\

As proven in Section 2 using the open mapping theorem, we have $C^0(t_1) = -\frac{\alpha_m}{2^m} < 0$.   Hence $\alpha_m > 0$.  Since $C^0(0) = 1$, we have
$$-\frac{\alpha_m}{2^m} + \prod_{j=1}^{m}(1 - \cos t_j) \sum_{k=m}^{n-m}\alpha_k = 1\;,
$$
and hence
$$
 \sum_{k=m}^{n-m}\alpha_k = \frac{1 + \frac{\alpha_m}{2^m}}{\prod_{j=1}^{m}(1 - \cos t_j)} > 0\;.
 $$

Note

 $$
 C(\theta_1, t_2, t_3, \ldots, t_m; \theta_1) = \frac{-\frac{\alpha_m}{2^m}}{-\frac{\alpha_m}{2^m} + (1 - \cos \theta_1)  \prod_{j=2}^m(1 - \cos t_j)\sum_{k=m}^{n-m}\alpha_k}\;.$$

We have shown that $\alpha_m > 0$ and that $\sum_{k=m}^{n-m}\alpha_k > 0$.  So this formula implies that $$C(t_1 + \epsilon, t_2, t_3, \ldots, t_m; t_1 + \epsilon) - C(t_1, t_2, t_3, \ldots, t_m; t_1)\;,$$ viewed as a function of $\epsilon$, changes sign at  $\epsilon = 0$.  As \mbox{$S(t_1 + \epsilon , t_2, t_3, \ldots,t_m; t)$} changes sign at $t_1 + \epsilon$ for all $\epsilon$ sufficiently close to 0, we see that $(C^0(t), S^0(t))$ cannot be an optimal pair.
\\


\noindent \emph{Case 2: $C^0(\pi) \leq C^0(t_1) \leq 1$:}
\\

We first observe that
\begin{align}
C^0(\pi) &= -\frac{\alpha_m}{2^m} + \prod_{j=1}^{m}(-1 - \cos t_j) \sum_{k=m}^{n-m}(-1)^k\alpha_k \;\notag
\\&= -\frac{\alpha_m}{2^m} + (-1)^m\prod_{j=1}^m(1 + \cos t_j) \sum_{k=m}^{n-m}(-1)^k \alpha_k\;,\notag
\end{align}

\begin{align}
C(\theta_1, \ldots, \theta_m; \pi) &=  \frac{-\frac{\alpha_m}{2^m} + \prod_{j=1}^m(-1 - \cos \theta_j)\sum_{k=m}^{n-m}(-1)^k\alpha_k}{-\frac{\alpha_m}{2^m} + \prod_{j=1}^m(1 - \cos \theta_j)\sum_{k=m}^{n-m}\alpha_k}\;,\notag
\\&= \frac{-\frac{\alpha_m}{2^m} + (-1)^m\prod_{j=1}^m(1 + \cos \theta_j)\sum_{k=m}^{n-m}(-1)^k\alpha_k}{-\frac{\alpha_m}{2^m} + \prod_{j=1}^m(1 - \cos \theta_j)\sum_{k=m}^{n-m}\alpha_k}\;,\notag
\end{align}
and
$$C(t_1, \ldots, t_m; \pi) = C^0(\pi)\;.$$

Since we are in the case that $C^0(\pi) \leq C^0(t_1) = -\frac{\alpha_m}{2^m}$, we have $(-1)^m\sum_{k=m}^{n-m}(-1)^k\alpha_k \leq 0$, and hence
$$(-1)^m\prod_{j=1}^m(1 + \cos \theta_j)\sum_{k=m}^{n-m}(-1)^k\alpha_k$$ is nondecreasing with respect to each of the parameters $\theta_1$, \ldots, $\theta_m$.  As in Case 1, we also have that $C^0(0) = 1$ implies
$$-\frac{\alpha_m}{2^m} + \prod_{j=1}^{m}(1 - \cos t_j) \sum_{k=m}^{n-m}\alpha_k = 1\;,
$$
and hence
$$
 \sum_{k=m}^{n-m}\alpha_k = \frac{1 + \frac{\alpha_m}{2^m}}{\prod_{j=1}^{m}(1 - \cos t_j)} \geq 0\;
 $$
since $-\frac{\alpha_m}{2^m} = C^0(t_1) \leq 1$ by hypothesis.

Note that if $\theta_j$ is very close to $t_j$ for $j=1, \ldots, m$, $C(\theta_1, \ldots, \theta_m;\pi)$ is very close to $C^0(\pi)$ and hence the denominator in the expression for $C(\theta_1, \ldots, \theta_m;\pi)$ above is very close to 1.   As the quotient is negative when the $\theta_j$ are close to the respective $t_j$, one of the sums $\sum_{k=m}^{n-m}(-1)^k \alpha_k$, $\sum_{k=m}^{n-m}\alpha_k$ must be nonzero.  As, at $\theta_1 = t_1$,  the numerator is negative and nondecreasing in $\theta_1$ and the denominator is positive and nondecreasing in $\theta_1$ and at least one of the numerator or denominator is strictly increasing, we must have that $C(\theta_1, t_2, \ldots, t_m; \pi) > C^0(\pi)$ when $0 < \theta_1 - t_1 < \epsilon$ for some $\epsilon > 0$.   (We remark that, in general, the fact that a quotient has an increasing numerator and denominator does \emph{not} imply that the quotient is increasing; here it is essential to recognize that at $t_1$ the numerator is negative and the denominator is positive.) 

Note that the above argument dispatches with the case that $C^0(\pi) < C^0(t_1)$.  Suppose now $C^0(\pi) = C^0(t_1)$.   As by the above argument we already know that $C(\theta_1, t_2, \ldots, t_m; \pi)$ is increasing in $\theta_1$ for $\theta_1$ near $t_1$,  it  suffices to show that $C(\theta_1, t_2, \ldots, t_m; \theta_1)$ and $C(\theta_1, t_2, \ldots, t_m; t_j)$ are increasing in $\theta_1$  for $j=2, 3, \ldots, m$.  Note we have that
\begin{align}C(\theta_1, t_2, \ldots, t_m; \theta_1) &= C(\theta_1, t_2, \ldots, t_m; t_j)\notag \\&= \frac{-\frac{\alpha_m}{2^m}}{\frac{-\alpha_m}{2^m} + (1 - \cos \theta_1)\prod_{j=2}^m (1 - \cos t_j)\sum_{k=m}^{n-m}\alpha_k}\;.\notag
  \end{align}
  Observe that $\sum_{k=m}^{n-m}\alpha_k$ cannot be 0, as otherwise the quotient would be identically 1, contradicting the fact that it tends to a negative number as $\theta_1$ tends to $t_1$.   As we have already observed in this case that $\sum_{k=m}^{n-m}\alpha_k \geq 0$, we conclude that $\sum_{k=m}^{n-m}\alpha_k > 0$.  Hence the denominator in the quotient above is strictly increasing in $\theta_1$ for $\theta_1$ near $t_1$.  Similarly to the argument above, as the denominator is near 1 for $\theta_1$ near $t_1$ and the numerator is negative (here $C^0(\pi) = C^0(t_1)$ and hence both are negative, and note $- \frac{\alpha_m}{2^m}$ equals both of these in this scenario), the expressions above for $C(\theta_1, t_2, \ldots, t_m; \theta_1)$ and $C(\theta_1, t_2, \ldots, t_m; t_j)$  are increasing in $\theta_1$ for $j=2, 3, \ldots, m$ when $\theta_1$ is near $t_1$.  This contradicts that $(C^0(t), S^0(t))$ forms an optimal pair.
\\

\noindent \emph{Case 3: $C^0(t_1) > 1$:}
\\

Applying Lemma 1, we have that the members of the optimal pair $(C^0(t), S^0(t))$ may be expressed as
$$C^0(t) = -\frac{\beta_1}{2} + (\cos t - \cos t_1)\sum_{j=1}^{n-1}\beta_j \cos jt\;,$$
$$S^0(t) = (\cos t - \cos t_1)\sum_{j=1}^{n-1}\beta_j \sin jt\;,$$ where
$$-\frac{\beta_1}{2} + (1 - \cos t_1)\sum_{j=1}^{n-1}\beta_j = 1 \;, \; C^0(t_1) = -\frac{\beta_1}{2} > 1\;,$$
and $$
C^0(\pi) = - \frac{\beta_1}{2}  (1 + \cos t_1)\sum_{j=1}^{n-1}(-1)^j\beta_j < 0\;.$$  (Recall that one of the $C^0(t_j)$ or $C^0(\pi)$ must be negative by Lemma 3.)  These yield
$$\sum_{j=1}^{n-1}\beta_j = \frac{\frac{\beta_1}{2} + 1}{1 - \cos t_1} < 0\;,\;\sum_{j=1}^{n-1}(-1)^j\beta_j < 0\;.$$
Consider now the collection of pairs of  polynomials $(C(\theta, t), S(\theta, t))$ conjugate in the variable $t$ defined by
$$C(\theta, t) = \frac{-\frac{\beta_1}{2} + (\cos t - \cos \theta)\sum_{j=m}^{n-1}\beta_j \cos jt}{-\frac{\beta_1}{2} + (1 - \cos \theta)\sum_{j=1}^{n-1}\beta_j}\;,$$
$$S(\theta, t) = \frac{(\cos t - \cos \theta)\sum_{j=m}^{n-1}\beta_j \sin jt}{-\frac{\beta_1}{2} + (1 - \cos \theta)\sum_{j=1}^{n-1}\beta_j}\;.$$
Analogous to the previous two cases, we have $C(t_1, t) = C^0(t)$ and $S(t_1, t) = S^0(t)$. Note that
$$C(\theta, \pi) = \frac{-\frac{\beta_1}{2} - (1 + \cos \theta)\sum_{j=1}^{n-1} (-1)^j \beta_j}{-\frac{\beta_1}{2} + (1 - \cos \theta)\sum_{j=1}^{n-1}\beta_j}\;.$$
Using the quotient rule, we see that the function $C(\theta, \pi)$ is either monotonic on $(0,\pi)$ as a function of $\cos \theta$ or is identically constant.   In the first case, there exists $\theta_1$ close to $t_1$ such that $C(\theta_1, \pi) > C(t_1,\pi) = C^0(\pi)$, contradicting that $(C^0(t), S^0(t))$ is an optimal pair.

Hence we may assume without loss of generality that $C(\theta, \pi) \equiv \gamma$. Since $C(t_1, \pi) = C^0(\pi) < 0$ (either $C^0(t_1)$ or $C^0(\pi)$ must be less than zero and the former does not hold in Case 3) we must have $\gamma < 0$.  Hence
$$
\lim_{\theta \rightarrow \pi} C(\theta, \pi) = \frac{-\frac{\beta_1}{2}}{-\frac{\beta_1}{2} + 2 \sum_{j=1}^{n-1}\beta_j} < 0\;.$$  Since $-\frac{\beta_1}{2} > 0$, we must have that $\sum_{j=1}^{n-1}\beta_j < 0$ and that the function $-\frac{\beta_1}{2} + (1 - \cos \theta)\sum_{j=1}^{n-1}\beta_j$ is positive at 0 and negative for $\theta$ close to $\pi$.   Hence by the intermediate value theorem we realize that there exists $\theta_2$ so that
$$C(\theta_2, \theta_2) = \frac{-\frac{\beta_1}{2}}{-\frac{\beta_1}{2} + (1 - \cos \theta_2)\sum_{j=1}^{n-1}\beta_j} < 2^n \gamma\;.$$

Now, the absolute value of a trigonometric polynomial does not exceed the sum of the absolute value of the coefficients, and hence the sum of the absolute values of the coefficients of the polynomial $C(\theta_2, t)$ must exceed $-2^n \gamma$.   But then by Lemma 2 we must have $|C(\theta_2, \pi)| > |\gamma|$ contradicting that $C(\theta_2, \pi) = \gamma$.   Hence the pair $(C^0(t), S^0(t))$ cannot be optimal.
\end{proof}

We now make some observations that will motivate our proof of Theorem 5.  We have shown that if $(C(t), S(t))$ is a pair of optimal polynomials, then $S(t)$ cannot have a sign change in $(0,\pi)$.  This leads directly to considerations of nonnegative trigonometric polynomials.   An early estimate associated to the coefficients of nonnegative trigonometric polynomials is due to Fej\'er in \cite{fejer1915}; in particular he proved that if the trigonometric polynomial
$$1 + \lambda_1 \cos t + \cdots + \lambda_n \cos nt$$  is nonnegative,
then
$$\left|\lambda_1\right| \leq 2 \cos\frac{\pi}{n+2}\;.$$   An explicit example of a polynomial satisfying the upper bound for $\lambda_1$ was given by Egerv\'ary and Sz\'asz, who in \cite{evszasz} proved that

\begin{equation}\left|\sum_{k=0}^{n}\sin\frac{(k+1)\pi}{n+2}e^{ikt}\right|^2 = \frac{n+2}{2} + \sum_{k=1}^n \left\{(n - k + 1)\cos\frac{k\pi}{n+2} + \frac{\sin\frac{(k+1)\pi}{n+2}}{\sin\frac{\pi}{n+2}}\right\}\cos kt\;.\notag
\end{equation}

Using appropriate substitution, by defining the coefficients $b_k$ by $b_0 = 1$,

$$b_k = \frac{(N-k+2)\sin((k+1)\frac{\pi}{N+1}) - (N-k)\sin((k-1)\frac{\pi}{N+1})}{(N+1)\sin \frac{\pi}{N+1}}\;,\; k=2, \ldots, N-1$$
we have
\begin{equation}\sum_{k=0}^{N-1}b_k \cos(kt) = \frac{2}{N+1}\left|\sum_{k=0}^{N-1}\sin((k+1)\frac{\pi}{N+1})e^{ikt}\right|^2\;\label{szasz}\end{equation}
and hence the above Fej\'er polynomial is nonnegative.



\begin{proof}[Proof of Theorems \ref{thm1}, \ref{thm2}, and \ref{thm5}]

     As indicated previously, Theorems 1 and 2 follow from Theorem 5, so it suffices to prove the latter.

Let $(C^0(t), S^0(t))$ be an optimal pair of conjugate trigonometric polynomials, where $S(t) = \sum_{j=1}^N a_j^0 \sin jt$.   Via the trigonometric identity
$$\sin a \cos b = \frac{1}{2}[\sin(a+b) + \sin(a-b)]$$
we express $S^0(t)$ as
$$S^0(t) = \sin t \cdot \left(\gamma_1^0 + 2 \gamma_2^0 \cos t + \cdots + 2 \gamma_N^0 \cos (N-1)t\right)\;,$$
where there is a bijective correspondence between $a_1^0, \ldots, a_N^0$ and $\gamma_1^0, \ldots, \gamma_N^0$.   Since $a_1^0 + \cdots + a_N^0 = 1$, one can use the above identity to show that $\gamma_1^0 + \gamma_2^0 = 1$.  Note moreover that we have $$-a_1^0 + a_2^0 - a_3^0 + \cdots + (-1)^Na_N^0 = -\gamma_1^0 + \gamma_2^0.$$   Since by the previous lemma we have that $S^0(t)$ does not change sign in $(0,\pi)$, by Lemma 4 we have that
\begin{align}
\sup_{(a_{1}, \ldots, a_{N}) : \atop a_1 + \cdots + a_N = 1}\{\rho_1(a_1, \ldots, a_N)\} \notag &= C^0(\pi) \notag\\
&=-a_1^0 + a_2^0 - a_3^0 + \cdots + (-1)^N a_N^0\notag\\&= \gamma^0_1 + \gamma^0_2\;. \notag
\end{align}

Since $S^0(t)$ has no sign change in $(0,\pi)$, neither does $S^0(t)/ \sin t$.   Hence by the Fej\'er inequality for non-negative polynomials \cite{fejer1915}, we have
$$|\gamma^0_2| \leq \cos \frac{\pi}{N+1}\cdot |\gamma^0_1|\;.$$
Accordingly we have
$$C^0(\pi) \leq \max_{\gamma_1, \gamma_2}\left\{-\gamma_1 + \gamma_2 : \gamma_1 + \gamma_2 = 1\;\textup{and} |\gamma_2| \leq \cos \frac{\pi}{N+1}\cdot |\gamma_1|\right\}\;.$$
One can compute that this maximum is achieved by
$$\gamma_1 = \frac{1}{1 + \cos \frac{\pi}{N+1}}, \gamma_2 = \frac{\cos \frac{\pi}{N+1}}{1 + \cos \frac{\pi}{N+1}}$$
and equals
$$-\frac{1 - \cos\frac{\pi}{N+1}}{1 + \cos \frac{\pi}{N+1}} = -\tan^2\frac{\pi}{2(N+1)}\;.$$  In particular, we then have that
$$C^0(\pi) \leq -\tan^2\frac{\pi}{2(N+1)}\;.$$

This upper bound on $C^0(\pi)$ may be realized by using the polynomial featured in Equation \ref{szasz} above, appropriately scaled.  If we define the coefficients $b_k$ by $b_0 = 1$,

$$b_k = \frac{(N-k+2)\sin((k+1)\frac{\pi}{N+1}) - (N-k)\sin((k-1)\frac{\pi}{N+1})}{(N+1)\sin \frac{\pi}{N+1}}\;,\; k=2, \ldots, N-1$$
we have
$$\sum_{k=0}^{N-1}b_k \cos(kt) = \frac{2}{N+1}\left|\sum_{k=0}^{N-1}\sin((k+1)\frac{\pi}{N+1})e^{ikt}\right|^2\;$$
is nonnegative for $t \in (0,\pi)$.  Setting $$\gamma_1 = \frac{1}{1 + \cos\frac{\pi}{N+1}}$$ and $$\gamma_k = \frac{1}{2(1 + \cos\frac{\pi}{N+1})}b_{k-1},$$ we indeed have that
$$\gamma_1 + 2 \gamma_2 \cos t + \cdots + 2\gamma_N \cos(N-1)t$$ is nonnegative on $(0, \pi)$ and
\begin{align}\gamma_2 = \frac{1}{2(1 + \cos\frac{\pi}{N+1})}b_{1} &= \frac{1}{2(1 + \cos\frac{\pi}{N+1})}\frac{(N+1)\sin\frac{2\pi}{N+1}}{(N+1)\sin\frac{\pi}{N+1}}\notag\\&=\frac{1}{2(1 + \cos\frac{\pi}{N+1})}\frac{2\sin\frac{\pi}{N+1}\cos\frac{\pi}{N+1}}{\sin\frac{\pi}{N+1}}\notag\\&=\frac{\cos\frac{\pi}{N+1}}{1 + \cos\frac{\pi}{N+1}}.\notag\end{align}
Moreover, setting
$$S(t) = \sin(t) \left(\gamma_1 + 2\gamma_2\cos t + \cdots + 2 \gamma_N \cos(N-1)t\right)\;,$$
we have
$$S(t) = \sum_{j=1}^N a_j \sin jt\;$$  is a nonnegative trigonometric polynomial on $(0,\pi)$, where here
\begin{equation}\label{aj}a_j = \gamma_j - \gamma_{j+2} = 2 \tan\frac{\pi}{2(N+1)}\cdot\left(1 - \frac{j}{N+1}\right)\cdot \sin\frac{\pi j}{N+1}\;, j=1, \ldots, N\;,\notag \end{equation}
 setting $\gamma_{N+1} = \gamma_{N+2} = 0$ for convenience.  Note that
$$\sum_{j=1}^N a_j = \gamma_1 + \gamma_2 = 1\;.$$   Hence, defining $C(t) = \sum_{j=1}^N a_j \cos jt$, we see that $(C(t), S(t))$ forms an optimal conjugate pair of trigonometric polynomials and that $$C(\pi) = -\tan^2\frac{\pi}{2(N+1)}\;.$$   Lemma 4 then immediately implies
$$\sup_{(a_{1}, \ldots, a_{N}) : \atop a_1 + \cdots + a_N = 1}\{\rho_1(a_1, \ldots, a_N)\} = -\tan^2\frac{\pi}{2(N+1)}\;.$$   In particular, we have now that

$$\sup_{a_1 + \cdots + a_N = 1} \inf \left\{\sum_{j=1}^N a_j \cos jt : t = \pi \;\textup{ or }\sum_{j = 1}^N a_j \sin jt\;\;\textup{ changes sign at }t\right\} $$ $$= -\tan^2\frac{\pi}{2(N+1)}\;.$$

It remains to show

$$ \sup_{a_1 + \cdots + a_N = 1} \inf \left\{\sum_{j=1}^{N}a_j \cos jt : \sum_{j=1}^{N}a_j \sin jt = 0\right\} = -\tan^2\frac{\pi}{2(N+1)}\;.$$   Note there is content in this last step as, although we know the Fej\'er polynomial above is nonnegative, we do not have precise information as to where it vanishes.

We proceed as follows.   Define the coefficients $a_j$ as above in (\ref{aj}).  For $\epsilon > 0$, define the conjugate pair of trigonometric polynomials $(C^\epsilon(t), S^\epsilon(t))$ by

 $$C^\epsilon(t) = \frac{a_1 + \epsilon}{1 + \epsilon} \cos t + \frac{a_2}{1 + \epsilon}\cos 2t + \cdots + \frac{a_N}{1 + \epsilon}\cos Nt\;;$$

$$S^\epsilon(t) = \frac{a_1 + \epsilon}{1 + \epsilon} \sin t + \frac{a_2}{1 + \epsilon}\sin 2t + \cdots + \frac{a_N}{1 + \epsilon}\sin Nt\;.$$

 Since $\sum_{j=1}^N a_j = 1$, we have $$\frac{a_1 + \epsilon}{1 + \epsilon} + \frac{a_2}{1 + \epsilon} + \cdots + \frac{a_N}{1 + \epsilon} = 1\;.$$   Moreover,

 $$S^\epsilon(t) = \frac{S(t)}{1 + \epsilon} + \frac{\epsilon}{1 + \epsilon}\sin t\;,$$ and hence $S^\epsilon(t) > 0$ for $t \in (0,\pi)$.   Note $$C^\epsilon (\pi) = \frac{1}{1 + \epsilon}\left(-\tan^2\frac{\pi}{2(N+1)}\right) + \frac{\epsilon}{1 + \epsilon}$$ and hence
 $$\lim_{\epsilon \rightarrow 0^+} C^\epsilon(\pi) = -\tan^2\frac{\pi}{2(N+1)}\;.$$   The desired result then holds.

\end{proof}

\section{The $T=2$ case}

In this section we prove Theorem 6 and obtain Theorems 3 and 4 as corollaries.   Our strategy is similar to the one employed in our proof of Theorem 5.  In particular, we note that
$$ \sup_{a_1 + \cdots + a_N = 1} -\left(\inf_{t \in [0, 2\pi)}\left\{\sum_{j=1}^N a_j \sin(2j-1)t : \sum_{j=1}^N a_j\cos(2j-1)t = 0\right\}\right)^2 $$ is bounded from above by

$$- \inf_{a_1 + \cdots + a_N = 1}\left(\sup_{t \in [0, 2\pi)} \left\{\sum_{j=1}^N a_j \sin(2j-1)t : t = \pm \frac{\pi}{2}\;\;\textup{or}\;\; \sum_{j=1}^N a_j\cos(2j-1)t  \;\;\textup{changes sign at}\;t\right\}\right)^2 $$
and subsequently show that this latter expression equals
$-\frac{1}{N^2}$.   Afterwards, we shall again employ Fej\'er polynomials to show that,  defining $a_1^0, \ldots, a_N^0$ by
  \begin{equation}
a_j^0 =\frac{2(N-j)+1}{N^{2} } ,\; j=1,\, \ldots \, ,N, \notag
\end{equation}
and setting  $a_1^\epsilon = \frac{a_1^0 + \epsilon}{1 + \epsilon}$, $a_j^\epsilon = \frac{a_j^0}{1 + \epsilon} ,\; j=2, \ldots, N$\;, we have
\mbox{ $a_1^\epsilon + \cdots + a_N^\epsilon = 1$} and
$$\frac{-1}{N^2} = \lim_{\epsilon \rightarrow 0+}  -\left(\inf_{t \in [0,2\pi)} \left\{\sum_{j=1}^N a_j^\epsilon \sin(2j-1)t : \sum_{j=1}^N a_j^\epsilon\cos(2j-1)t = 0\right\}\right)^2\;.
  $$
\\

Given $a_1, \ldots, a_N$ such that $a_1 + \cdots + a_N = 1$, we define the associated pair $(C(t), S(t))$ of conjugate trigonometric polynomials by
$$C(t) = \sum_{j=1}^N a_j \cos(2j-1)t\;\;,\;\;S(t) = \sum_{j=1}^N a_j \sin(2j-1)t\;.$$
The function $\rho_2(a_1, \ldots, a_N)$ is given by
$$\rho_2(a_1, \ldots, a_N) = \sup \left\{S(t) : t \in \mathcal{T} \cup \{\pm\frac{\pi}{2}\}\right\}\;,$$
where $\mathcal{T}$ is the set of points $t$ in $(-\frac{\pi}{2}, 0) \cup (0,\frac{\pi}{2})$ where $C(t)$ changes sign and $S(t)$ is positive.

\begin{lem}\label{l6}
There exists a pair of conjugate trigonometric polynomials $(C^0(t), S^0(t))$ such that
$$\inf_{(a_1, \ldots, a_N): \atop a_1 + \cdots + a_N = 1} \{\rho_2(a_1, \ldots, a_N)\} = \sup \left\{S^0(t) : t \in \mathcal{T}^0 \cup \left\{\pm\frac{\pi}{2}\right\}\right\}\;,$$
where $\mathcal{T}^0$ is the set of points in $(-\frac{\pi}{2},\frac{\pi}{2})$ where $S^0(t)$ is positive and $C^0(t)$ changes sign.
\end{lem}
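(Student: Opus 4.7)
The plan is to parallel the proof of Lemma 4, replacing the upper semi-continuity of $\rho_1$ used there (for maximizing an infimum) with a lower semi-continuity argument for $\rho_2$ (for minimizing a supremum). Set $A_R := \{(a_1, \ldots, a_N) \in \mathbb{R}^N : \sum_j a_j = 1,\ \sum_j |a_j| \leq R\}$, a compact subset of the affine hyperplane $\sum_j a_j = 1$.

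First I would verify that $\rho_2$ is lower semi-continuous on $A_R$. Given $a \in A_R$, a sequence $a^{(k)} \to a$ in $A_R$, and any $t^* \in \mathcal{T}_a \cup \{\pm \pi/2\}$, continuity of $S$ in the coefficients handles $t^* = \pm \pi/2$ at once. For $t^* \in \mathcal{T}_a$, both the strict sign change of $C$ at $t^*$ and the strict positivity of $S$ near $t^*$ persist under small perturbations of $a$, so for large $k$ there exists $t^{(k)} \in \mathcal{T}_{a^{(k)}}$ near $t^*$ with the corresponding $S$-value tending to $S(t^*)$. Taking supremum over $t^*$ yields $\liminf_k \rho_2(a^{(k)}) \geq \rho_2(a)$. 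The Weierstrass extreme value theorem then produces a minimizer of $\rho_2$ on $A_R$ at some $(a_1^0, \ldots, a_N^0) \in A_R$ with associated pair $(C^0, S^0)$.

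Next I would show that $\inf_{A_R}\rho_2$ is independent of $R$ once $R$ is sufficiently large. Choosing $a_1 = 1$, $a_j = 0$ for $j \geq 2$ gives $C(t) = \cos t$, $S(t) = \sin t$ and $\rho_2 = 1$, so $\inf_{A_R}\rho_2 \leq 1$. For the reverse direction, apply Lemma 2 to $F(z) = \sum_j a_j z^{2j-1}$: $F(\mathbb{D})$ contains the disc of radius $M/(2^{2N-1}-1)$, where $M := \sum_j |a_j|$. Since the boundary of $F(\bar{\mathbb{D}})$ lies in $F(\partial \mathbb{D})$ by the open mapping theorem, the topmost point of $F(\bar{\mathbb{D}})$ on the positive imaginary axis equals $F(e^{it_0})$ for some $t_0$, of imaginary part at least $M/(2^{2N-1}-1)$. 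The symmetries $C(\pi-t) = -C(t)$, $S(\pi-t) = S(t)$, $C(-t) = C(t)$, $S(-t) = -S(t)$ let us arrange $t_0 \in (0, \pi/2]$, so that $C(t_0) = 0$ with $S(t_0) \geq M/(2^{2N-1}-1)$. Because the tangent to the curve $F(\partial \mathbb{D})$ at this topmost crossing is horizontal, $S'(t_0) = 0$; provided $F'(e^{it_0}) \neq 0$, we also have $C'(t_0) \neq 0$, making $t_0$ a transverse sign change of $C$, so $t_0 \in \mathcal{T} \cup \{\pi/2\}$ and $\rho_2(a) \geq M/(2^{2N-1}-1)$. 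Hence, for $R > 2^{2N-1}-1$, any $a \in A_R$ with $\sum_j|a_j| > 2^{2N-1}-1$ satisfies $\rho_2(a) > 1$ and cannot attain $\inf_{A_R}\rho_2$.

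The main obstacle is the non-generic possibility $F'(e^{it_0}) = 0$ at the topmost crossing, under which the crossing is tangent rather than transverse and $t_0$ need not lie in $\mathcal{T}$. This is a codimension-two coincidence and therefore happens only on a thin exceptional subset of parameter space. I plan to dispatch it by a density argument: for any $a$ in the exceptional set with large $M$, a small perturbation of $a$ within $A_R$ destroys the critical-point coincidence and produces a transverse configuration with $\rho_2 \geq M/(2^{2N-1}-1) - o(1) > 1$; combined with the lower semi-continuity established in the first step, this rules out minimizers with unbounded $M$ even on the exceptional set. Together with the compactness argument above, this yields the required minimizer $(C^0(t), S^0(t))$ and completes the proof of Lemma 6.
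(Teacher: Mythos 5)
Your overall strategy---compactness of $A_R$, a semicontinuity argument in the appropriate direction (lower, since one is now minimizing a supremum), the Weierstrass theorem, and then a bound on $\sum_j|a_j|$ via Lemma 2 to make the answer independent of $R$---is exactly the paper's, which simply declares the proof ``virtually identical'' to that of Lemma 4. The first half of your argument is fine: sign changes of $C$ at points where $S>0$ persist under small perturbations of the coefficients, so $\liminf_k\rho_2(a^{(k)})\ge\rho_2(a)$, and a minimizer exists on each $A_R$.

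The genuine gap is in your treatment of the tangential (``exceptional'') case. You are right that the extremal point $F(e^{it_0})$ on the imaginary axis only gives a zero of $C$, not necessarily a sign change, so $t_0$ need not lie in $\mathcal{T}$. But your proposed repair---perturb $a$ to a transverse $a'$ with $\rho_2(a')>1$ and invoke lower semicontinuity---runs in the wrong direction. Lower semicontinuity gives $\rho_2(a)\le\liminf_{a'\to a}\rho_2(a')$, which is an \emph{upper} bound on $\rho_2(a)$; knowing that every nearby $a'$ has $\rho_2(a')>1$ is perfectly consistent with $\rho_2(a)$ being small, hence with $a$ being the minimizer. (This is precisely the failure of \emph{upper} semicontinuity: a tangential zero of $C$ where $S$ is large is invisible to $\rho_2(a)$ but becomes a pair of sign changes for nearby $a'$.) What is needed is a direct lower bound on $\rho_2(a)$ itself when $M=\sum_j|a_j|$ is large. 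One correct route: for $0<y<r:=M/(2^{2N-1}-1)$ with $iy$ off the curve, the winding number of $t\mapsto F(e^{it})$ about $iy$ is at least $1$ (argument principle, since $iy\in F(\mathbb{D})$), while it is $0$ for large $y$; the change of winding number along the ray $\{iy: y>r'\}$ equals the sum of signed crossings, and tangential touches (even-order zeros of $C$) contribute zero, so there is a genuine sign change of $C$ at some $t_0$ with $S(t_0)>r'$ for every $r'<r$. The symmetries $C(\pi-t)=-C(t)$, $S(\pi-t)=S(t)$ then place such a $t_0$ in $(0,\pi/2]$, giving $\rho_2(a)\ge r$. I would also note that your ``generic'' case is itself shaky: $F'(e^{it_0})\ne0$ only gives $(C'(t_0),S'(t_0))\ne(0,0)$, and the tangent at the topmost \emph{axis} point of the image need not be horizontal, so you cannot conclude $C'(t_0)\ne0$; the winding-number argument disposes of both cases at once. (To be fair, the paper's own one-line proof, and the proof of Lemma 4 it defers to, elide this same point.)
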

\begin{proof}
The proof is virtually identical to that of Lemma \ref{l4}\;.
\end{proof}

If a conjugate pair $(C(t), S(t))$ satisfies the condition of Lemma 6, we will refer to it as being \emph{optimal}.

\begin{lem}\label{l7}
If the polynomial $C(t) = \sum_{j=1}^N a_j \cos(2j-1)t$ has a sign change in $(-\frac{\pi}{2},\frac{\pi}{2})$, the associated pair of conjugate polynomials $(C(t), S(t))$ cannot be optimal.
\end{lem}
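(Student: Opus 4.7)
The plan is to mimic the proof of Lemma 5, with the roles of $C$ and $S$ interchanged and the endpoint $\pi$ replaced by $\pm \pi/2$. Assume for contradiction that $(C^0, S^0)$ is optimal and $C^0$ changes sign in $(-\pi/2, \pi/2)$. Because $C^0$ is even and $S^0$ is odd (every term has frequency $2j-1$), the sign changes of $C^0$ come in symmetric pairs $\pm s$; let $s_1, \ldots, s_q$ be the sign changes of $C^0$ in $(0, \pi/2)$, reordered so that $|S^0(s_1)| = \cdots = |S^0(s_m)| =: \gamma$ is the maximum of $\{|S^0(s_i)|\}_{i=1}^q$. By the oddness of $S^0$, this $\gamma$ equals $\sup\{S^0(t) : t \in \mathcal{T}^0\}$ (each $\epsilon_i s_i \in \mathcal{T}^0$, with $\epsilon_i$ the sign of $S^0(s_i)$, contributes $|S^0(s_i)|$). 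The goal is to perturb $(s_1, \ldots, s_m)$ slightly to some $(\theta_1, \ldots, \theta_m)$ so that both the perturbed analog of $\gamma$ and $|S(\theta_1, \ldots, \theta_m; \pi/2)|$ become strictly less than $\gamma$, contradicting the optimality of $(C^0, S^0)$.

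The first step is an odd-frequency analog of Lemma 1. Setting $H(z) = \sum a_j z^{2j-1}$, the polynomial $H(z)^2 + \gamma^2$ vanishes at $e^{\pm i s_j}$ for $j = 1, \ldots, m$ (since $C^0 = 0$ and $S^0 = \pm \gamma$ at each $\pm s_j$), and contains only even powers of $z$, so the substitution $w = z^2$ places it in the setting of Lemma 1 with $2m$ unit-circle roots $e^{\pm 2i s_j}$. Running the Bezout-style argument there yields
$$C^0(t) = \prod_{j=1}^{m}(\cos t - \cos s_j) \cdot \sum_k \alpha_k \cos(2k-1)t, \qquad S^0(t) = \sigma(t) + \prod_{j=1}^{m}(\cos t - \cos s_j) \cdot \sum_k \alpha_k \sin(2k-1)t,$$
where $\sigma(t)$ is the low-degree odd-frequency trigonometric polynomial with $\sigma(s_j) = S^0(s_j)$ for $j = 1, \ldots, m$, and the $\alpha_k$ are uniquely determined by $\gamma$ and the $a_j$'s. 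I would then define the perturbed family $(C(\theta_1, \ldots, \theta_m; t), S(\theta_1, \ldots, \theta_m; t))$ by replacing each $\cos s_j$ with $\cos \theta_j$ in this factorization and renormalizing by a factor $N(\theta_1, \ldots, \theta_m)$ to keep the cosine coefficients of $C$ summing to $1$, exactly as in the proof of Lemma 5.

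The contradiction is then extracted from a case analysis on $\gamma$ versus $|S^0(\pi/2)|$ paralleling the three cases in the proof of Lemma 5 essentially verbatim: when $\gamma > |S^0(\pi/2)|$, a direct derivative computation shows $S(\theta_1, s_2, \ldots, s_m; \theta_1)$ is strictly monotonic in $\theta_1$ near $s_1$, and the sign of the perturbation is chosen to decrease it; when $\gamma = |S^0(\pi/2)|$, explicit quotient expressions for $S(\theta_1, \ldots; \pi/2)$ and $S(\theta_1, \ldots; \theta_j)$ have numerators and denominators whose monotonicities have compatible signs, so a joint perturbation drives both strictly below $\gamma$; and in the extremal regime where $\gamma$ is large (the analog of Case 3), an appeal to Lemma 2 gives an $\ell^1$-norm bound on the coefficients of $S(\theta, \cdot)$ that is incompatible with the assumption $|S(\theta, \pi/2)| = \gamma$. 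The principal obstacle is the first step — the precise formulation and proof of the odd-frequency Bezout-style factorization, in particular handling the interpolant $\sigma(t)$ and the $\pm s_j$ reflection symmetry; once that is in hand, the remaining case analysis is a disciplined transliteration of the Lemma 5 argument.
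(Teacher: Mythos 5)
There is a genuine gap, and it sits exactly where you flagged ``the principal obstacle'': the odd-frequency Bezout factorization you propose cannot hold in the form you wrote it. Any cosine polynomial with only odd frequencies satisfies $C^0(\pi-t)=-C^0(t)$, so its sign changes in $(0,\pi)$ come in pairs $\{s,\pi-s\}$ and the elementary factor attached to a sign change at $s_j$ must vanish at both points; that factor is $\cos 2t-\cos 2s_j=2(\cos t-\cos s_j)(\cos t+\cos s_j)$, not $\cos t-\cos s_j$. With your factors, the right-hand side $\prod_j(\cos t-\cos s_j)\sum_k\alpha_k\cos(2k-1)t$ is not antisymmetric about $\pi/2$ while the left-hand side is, so the identity is impossible except degenerately. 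The device $H(z)^2+\gamma^2$ does not rescue this: its real and imaginary parts on the circle are $C^2-S^2+\gamma^2$ and $2CS$, so a factorization of it in the variable $w=z^2$ factors products and differences of squares, not $C^0$ and $S^0$ separately. The paper instead multiplies by $2\sin t$: writing $\hat a_j=a_j-a_{j+1}$ one gets $2\sin t\,C^0(t)=\sum_j\hat a_j\sin 2jt$ and $2\sin t\,S^0(t)=\sum_j\hat a_j-\sum_j\hat a_j\cos 2jt$, which is a genuine conjugate pair in the variable $2t$ to which Lemma 1 applies, producing the factor $(\cos 2t-\cos 2t_1)$.

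A second, related obstruction: you cannot factor out all $m$ maximal points simultaneously as in Lemma 5. Lemma 1 requires the conjugate polynomial to take the \emph{same} value $\gamma$ at all the factored points, but after the $2\sin t$ reduction the relevant values are $a_1-2\gamma\sin t_k$, which differ because the $\sin t_k$ differ (and in your $|S^0(s_j)|=\gamma$ normalization the signs $\epsilon_j$ differ as well, so your interpolant $\sigma$ would be nonconstant and Lemma 1 would not apply at all). The paper therefore factors out only the single point $t_1$, perturbs only that point to $\theta$, and then checks by hand that $S(\theta,\theta)$ \emph{and} $S(\theta,t_k)$ for $k=2,\dots,m$ are all decreasing in $\theta$ (using $\sum_j\alpha_j\cos 2jt_k>0$, which follows from $S^0(t_k)=S^0(t_1)$). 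Your subsequent case analysis would also need to be reorganized around the paper's trichotomy $S^0(t_1)>S^0(\pi/2)$, $S^0(\pi/2)>S^0(t_1)\ge 0$, $S^0(t_1)<0$ --- in particular the third case is a short sign argument on $\sum_j(-1)^j\alpha_j$, not an appeal to Lemma 2 --- but the factorization step is the part that must be repaired before any of that can proceed.
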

\begin{proof}
We proceed by contradiction.   Suppose $(C^0(t), S^0(t))$ were an optimal pair but that $C^0(t)$ had a sign change in $(-\frac{\pi}{2},\frac{\pi}{2})$.  Then there would exist a nonempty set $\mathcal{T} = \{t_1, \ldots, t_q\}$ consisting of the points in $(-\frac{\pi}{2},\frac{\pi}{2})$ where $C^0(t)$ changes sign.   $C^0(t)$ being a polynomial of degree $N$, we have that $q \leq 2N-1$. We assume without loss of generality that $S^0(\frac{\pi}{2}) \geq 0$ and that
$$S^0(t_1) = S^0(t_2) = \cdots = S^0(t_m) > S^0(t_{m+1}) \geq S^0(t_{m+2}) \geq \cdots \geq S^0(t_q)\;.$$

As in the proof of Lemma \ref{l5}, the proof now involves considering three cases: either $S^0(t_1) > S^0 (\frac{\pi}{2})$, $S^0(\frac{\pi}{2}) \geq S^0(t_1) \geq 0$, or $S^0(t_1) < 0$.
\\

\noindent \emph{Case 1: $S^0(t_1) > S^0 (\frac{\pi}{2})$:}
\\

Observe that the polynomials $C^0(t)$, $S^0(t)$ may be reexpressed as
$$C^0(t) = \frac{1}{2\sin t}\sum_{j=1}^N \hat{a}_j \sin 2jt\;,$$
$$S^0(t) = \frac{1}{2 \sin t}\left(\sum_{j=1}^N \hat{a}_j - \sum_{j=1}^N \hat{a}_j \cos 2jt \right)\;,$$
where $\hat{a}_j = a_j - a_{j+1}$, $j=1, \ldots, N$, setting here $a_{N+1} = 0 $ for convenience.   As $2 \sin t \cdot C^0(t)$ and $\sum_{j=1}^N \hat{a}_j - 2\sin t \cdot S^0(t)$ are conjugate trigonometric polynomials, the first of which being a sine polynomial vanishing on $\{t_1, \ldots, t_q\}$, Lemma 1 implies that
$$C^0(t) = \frac{1}{2 \sin t} \cdot \left(\cos 2t - \cos 2t_1\right)\sum_{j=1}^{N-1}\alpha_j \sin 2jt\;,$$
$$S^0(t) = \frac{1}{2 \sin t}\left(a_1 + \frac{\alpha_1}{2} - \left(\cos 2t - \cos 2t_1\right)\sum_{j=1}^{N-1}\alpha_j \cos 2jt\right)\;,$$
where $\alpha_j$, $j=1, \ldots, N-1$ are uniquely determined by $a_1, \ldots, a_N$ and $S^0(t_1)$, with
$\frac{\alpha_1}{2} = 2 S^0(t_1) \sin t_1 - a_1$. Note that as $C^0(0) = 1$, implying $\lim_{t \rightarrow 0}C^0(t) = 1$, we have
$$(1 - \cos 2t_1)\sum_{j=1}^{N-1}j \alpha_j = 1\;,$$ yielding that $\sum_{j=1}^{N-1}j \alpha_j > 0$.

We now construct the auxiliary trigonometric polynomials $S(\theta,t)$ and $C^0(\theta,t)$, defined by
$$C(\theta, t) = N(\theta)\frac{1}{2\sin t}(\cos 2t - \cos 2\theta)\sum_{j=1}^{N-1}\alpha_j \sin 2jt\;,$$
$$S(\theta, t) = N(\theta)\frac{1}{2\sin t}\left(a_1 + \frac{\alpha_1}{2} - (\cos 2t - \cos 2\theta)\sum_{j=1}^{N-1}\alpha_j \cos 2jt\right)\;,$$
where the normalization factor $N(\theta)$ is such that the sum of the coefficents of each of the polynomials $C(\theta, t)$ and $S(\theta, t)$ (expressed respectively as cosine and sine polynomials) is 1.   Note that the set of sign changes  in $(0, \pi/2)$  for $C(\theta, t)$ is $\mathcal{T}_\theta = \{\theta, t_2, \ldots, t_q\}.$   Observe also that $C(t_1, t) = C^0(t)$ and $S(t_1, t) = S^0(t)$.   We may find the normalizing factor $N(\theta)$ by noting that $C(\theta, 0) = 1$ implies
$$N(\theta) = \frac{1}{(1 - \cos 2\theta)\sum_{j=1}^{N-1} j \alpha_j}\;.$$   Hence the polynomials $C(\theta, t)$ and $S(\theta, t)$ may be expressed as
$$C(\theta, t) = \frac{1}{(1 - \cos 2\theta)\sum_{j=1}^{N-1}j \alpha_j}\cdot\frac{1}{2\sin t}(\cos 2t - \cos 2\theta)\sum_{j=1}^{N-1}\alpha_j \sin 2jt\;,$$
$$S(\theta, t) = \frac{a_1 + \frac{\alpha_1}{2} - (\cos 2t - \cos 2\theta)\sum_{j=1}^{N-1}\alpha_j \cos 2jt}{\left((1 - \cos 2\theta)\sum_{j=1}^{N-1}j \alpha_j\right)2\sin t }\;.$$

We now show that for some value of $\theta$ the value of $\rho_2(a_1, \ldots, a_n)$ associated to the pair $C(\theta, t), S(\theta,t)$ is less than that of the pair $(C^0(t), S^0(t))$, implying that the pair $(C^0(t), S^0(t))$ is not optimal.

Note that $$S(\theta, \theta) =\frac{a_1 + \frac{\alpha_1}{2}}{2 \sin \theta (1 - \cos 2\theta)(\sum_{j=1}^{N-1}j \alpha_j)}$$
and
$$S(\theta, t_k) = \frac{a_1 + \frac{\alpha_1}{2} - (\cos 2t_k - \cos 2\theta)\sum_{j=1}^{N-1}\alpha_j \cos 2jt_k}{2 \sin t_k (1 - \cos 2\theta)(\sum_{j=1}^{N-1}j \alpha_j)}\;; k = 2, \ldots, m\;.$$
Now, since $S^0(t_1) = S^0(t_k)$, $k = 2, \ldots, m$, we have
$$\sum_{j=1}^{N-1}\alpha_j \cos 2jt_k = \frac{a_1 + \frac{\alpha_1}{2}}{\sin t_1} \cdot \frac{\sin t_1 - \sin t_k}{\cos 2t_k - \cos 2t_1} > 0\;.$$  Hence the functions $S(\theta, \theta)$ and $S(\theta, t_k)$, $k=2, \ldots, m$ are all decreasing with respect to $\theta$ for $\theta \in (0, \pi / 2)$.   Accordingly, as $C(\theta, t)$ and $S(\theta, t)$ are continuous in $\theta$ and $t$, for sufficiently small $\epsilon > 0$ we have that $0 < \theta - t_1 < \epsilon$ implies
$$\max \left\{S(\theta, \theta), S(\theta, t_2), \ldots, S(\theta, t_q), S(\theta, \frac{\pi}{2})\right\}$$ is strictly less than
$$\max \left\{S^0(t_1), \ldots, S^0(t_q), S^0(\frac{\pi}{2})\right\}\;,$$
implying that $(C^0(t), S^0(t))$ is not an optimal pair.
\\

\noindent \emph{Case 2: $S^0(\frac{\pi}{2}) > S^0 (t_1) \geq 0$:}
\\

Observe that

$$S^0\left(\frac{\pi}{2}\right) = \frac{1}{2}\left(a_1 + \frac{\alpha_1}{2} + \left(1 + \cos 2t_1\right)\sum_{j=1}^{N-1}\left(-1\right)^j\alpha_j\right)$$ and
 $$ S\left(\theta, \frac{\pi}{2}\right) = \frac{a_1 + \frac{\alpha_1}{2} + \left(1 + \cos 2\theta\right)\sum_{j=1}^{N-1}(-1)^j\alpha_j}{2(1 - \cos 2\theta)\sum_{j=1}^{N-1}j \alpha_j}\;.$$

We have in this case that  $S^0(\frac{\pi}{2}) \geq \frac{1}{2 \sin t_1}(a_1 + \frac{\alpha_1}{2}) \geq 0$ and hence $\sum_{j=1}^{N-1}(-1)^j\alpha_j \geq 0$.  Arguing as in Case 1 we have that $S(\theta, \frac{\pi}{2}), S(\theta, \theta)$, and $S(\theta, t_k)$, $k=2, \ldots, m$ are decreasing in $\theta$, contradicting the optimality of $(C^0(t), S^0(t))$.
\\

\noindent \emph{Case 3: $S^0(t_1) < 0$:}
\\

Note that since in this case $S^0(t_1) < 0$ we have $a+1 + \frac{\alpha_1}{2} < 0$.   As one of $S^0(\frac{\pi}{2})$, $S(t_1)$ must exceed 0, we have $S^0(\frac{\pi}{2}) > 0$, from which the above formula for $S^0(\frac{\pi}{2})$ implies $\sum_{j=1}^{N-1}(-1)^j \alpha_j > 0$.   Hence the function $S(\theta, \theta)$ is decreasing in $\theta$ and the pair $(C^0(t), S^0(t))$  cannot be optimal.
\end{proof}

\begin{proof}[Proof of Theorems \ref{thm3}, \ref{thm4}, and  \ref{thm6}]

As indicated previously, it suffices to prove Theorem \ref{thm6}, as Theorems 3 and 4 follow.

By Lemma \ref{l7}, we have that
$$\inf_{(a_1, \ldots, a_N): \atop a_1 + \cdots + a_N = 1} \{\rho_2(a_1, \ldots, a_N)\} = \min_{a_1 + \cdots + a_N = 1}\left\{\left|S\left(\frac{\pi}{2}\right)\right| : C(t) > 0, t \in (0,\frac{\pi}{2})\right\}\;.$$

Note that the cosine polynomial $C(t) = \sum_{j=1}^N a_j \cos(2j-1)t$ may be written as
$$C(t) = \cos t \cdot \left(\gamma_1 + 2 \gamma_2 \cos 2t + \cdots + 2 \gamma_N \cos 2(N-1)t\right)\;,$$ where
$\gamma_1 + 2 \sum_{j=2}^N\gamma_j = \sum_{j=1}^{N} a_j = 1$.  Via the identity $\cos a \cos b = \frac{1}{2}[\cos(a + b) + \cos(a - b)]$ we have that
$$S\left(\frac{\pi}{2}\right) = a_1 - a_2 + \cdots + (-1)^N a_N = \gamma_1\;.
$$
Let now
$$\rho_2 = \min_{a_1 + \cdots + a_N = 1}\left\{\left|S\left(\frac{\pi}{2}\right)\right| : C(t) \geq 0, t \in \left(0, \frac{\pi}{2}\right)\right\}\;.$$
Then
$$\rho_2 = \min\left\{|\gamma_1| : \gamma_1 + 2 \sum_{j=1}^{N}\gamma_j = 1, C(t) \geq 0 \textup{ for } t \in (0,\frac{\pi}{2})\right\}\;.$$

Now, a nonnegative trigonometic polynomial of the form
$$g(\theta) = 1 + \lambda_1 \cos \theta + \mu_1 \sin \theta + \lambda_2 \cos 2\theta + \mu_2 \sin 2\theta + \cdots + \lambda_n \cos n \theta + \mu_n \sin n \theta$$
satisfies the inequality $$0 \leq g(\theta) \leq n + 1$$
 (see, e.g, \cite{fejer1915} or Problem 50 of \cite{polyaszego}.)   Accordingly, if $(C^0(t), S^0(t))$ is an optimal pair where $\gamma_1^0$ is associated to $C^0(t)$ as $\gamma_1$ is to $C(t)$ above, $\frac{C^0(0)}{\gamma_1^0} \leq N$ and hence $\rho_2 \leq \frac{1}{N}$.

With the above estimate, we have now established that

$$ \sup_{a_1 + \cdots + a_N = 1} -\left(\inf\left\{\sum_{j=1}^N a_j \sin(2j-1)t : \sum_{j=1}^N a_j\cos(2j-1)t = 0\right\}\right)^2 $$ is bounded from above by $-\frac{1}{N^2}$.  It remains to show that the supremum actually equals $-\frac{1}{N^2}$.  To that end, consider the optimal pair of trigonometric polynomials $(C^0(t), S^0(t))$ associated to the classical Fej\'er kernel, where
$$C^0(t) = \left(\frac{\sin Nt}{N \sin t}\right)^2 \cos t = \cos t \left(\frac{1}{N} + 2 \sum_{j=2}^N \frac{N - j + 1}{N^2}\cos 2(j-1)t\right)\;.$$
Notice here that the associated $\gamma_1^0$ is the (optimal) $\frac{1}{N}$ and the associated $\gamma_j^0$, $j=2, \ldots, N$ are given by
$$\gamma_j^0 = \frac{N - j + 1}{N^2}.$$
Setting for convention $\gamma_{N+1}^0 = 0$, the associated $a_j^0$ satisfy \mbox{$a_j^0 = \gamma_j^0 + \gamma_{j+1}^0$,} and hence $$a_j^0 = \frac{2(N - j) + 1}{N^2}; j=1, \ldots, N\;.$$

We now define the pair of conjugate polynomials $(C^\epsilon(t), S^\epsilon(t))$ by

$$C^\epsilon(t) = \sum_{j=1}^N a_j^\epsilon \cos(2j-1)t\;,S^\epsilon(t) = \sum_{j=1}^N a_j^\epsilon \sin(2j-1)t\;, $$ where
$$a_1^\epsilon = \frac{a_1^0 + \epsilon}{1 + \epsilon}\;,\;a_j^\epsilon = \frac{a_j^0}{1 + \epsilon}\;, j=2, \ldots, N.$$
Note that we indeed have $\sum_{j=1}^N a_j^\epsilon = 1$ and moreover that
$$C^\epsilon(t) = \frac{C^0(t)}{1 + \epsilon} + \frac{\epsilon}{1 + \epsilon}\cos t\;.$$  So $C^\epsilon(t) > 0$ for all $t \in (0, \frac{\pi}{2})$ and $\epsilon > 0$.  Hence
$$ \sup_{a_1 + \cdots + a_N = 1} -\left(\inf\left\{\sum_{j=1}^N a_j \sin(2j-1)t : \sum_{j=1}^N a_j\cos(2j-1)t = 0\right\}\right)^2  \geq - \left(S^\epsilon(\frac{\pi}{2})\right)^2\;.$$   As $$\lim_{\epsilon \rightarrow 0^+} S^\epsilon(\frac{\pi}{2}) = \frac{1}{N}\;,$$ the desired result holds.\end{proof}

\section{Future Directions}

The agenda for future work on this aspect of control theory is clear: if $\mu < 1$ and $T$ is an integer larger than 2, may we find $a_1, \ldots, a_N$ satisfying $a_1 + \cdots + a_N = 1$ such that all of the roots of $p(\lambda)$ lie in the unit disc, where
$$p(\lambda) =\lambda^{(N-1)T + 1} - \mu (q(\lambda))^{T}  \;,$$
with
$$q(\lambda) = a_1 \lambda^{N-1} + \cdots + a_{N-1}\lambda + a_N\;?$$
 If so, what is the infimum of the values of $\mu$ for which this can be done?
The reader may be somewhat surprised that we have been able to resolve this question for the $T = 1,2$ cases but the cases for higher values of $T$ remain.  In this regard, we should note that in the $T = 1, 2$ cases we were able to take advantage of some basic facts in complex analysis (such as if the square of a complex number $z$ is negative, then $z$ lies on the imaginary axis) that enabled us to reduce the problem to issues regarding nonnegative trigonometric polyomials on real line.   Such reductions are unavailable to us when $T \geq 3$ and we find ourselves in a position of needing to articulate and resolve issues associated to at present admittedly vague notions of ``complex-valued Fej\'er polynomials'' and a complex analytic analogue of the Fej\'er-Riesz theorem.  This is a subject of ongoing research.

\end{document}